\title[Discrete Morse Theory for Computing Cellular Sheaf Cohomology]{Discrete Morse Theory for Computing\\Cellular Sheaf Cohomology}
\author{Justin Curry}
\address{Department of Mathematics,
        University of Pennsylvania,
        Philadelphia PA, USA}
\email{jcurry@math.upenn.edu}
\author{Robert Ghrist}
\address{Departments of Mathematics and Electrical/Systems Engineering,
        University of Pennsylvania,
        Philadelphia PA, USA}
\email{ghrist@math.upenn.edu}
\author{Vidit Nanda}
\address{Department of Mathematics,
        University of Pennsylvania,
        Philadelphia PA, USA}
\email{vnanda@sas.upenn.edu}
\newtheorem{thm}{Theorem}[section]
\newtheorem*{thm*}{Theorem}
\newtheorem{lem}[thm]{Lemma}
\newtheorem{defn}[thm]{Definition}
\newtheorem{prop}[thm]{Proposition}
\newtheorem{rem}[thm]{Remark}
\newcommand{\setof}[1]{\left\{ {#1}\right\}}
\newcommand{\bR}{{\bf R}}
\newcommand{\N}{{\mathbb{N}}}
\newcommand{\R}{{\mathbb{R}}}
\newcommand{\Z}{{\mathbb{Z}}}
\newcommand{\cA}{{\mathcal A}}
\newcommand{\cF}{{\mathcal F}}
\newcommand{\cU}{{\mathcal U}}
\newcommand{\cV}{{\mathcal V}}
\newcommand{\cX}{{\mathcal X}}
\newcommand{\src}{{\mathsf s}}
\newcommand{\tgt}{{\mathsf t}}
\newcommand{\ow}{{\overline{w}}}
\newcommand{\Que}{{\sf{Que }}}
\def\mapright#1{\stackrel{#1}{\longrightarrow}}
\def\setof#1{\left\{{#1}\right\}}
\newcommand{\id}{\text{id}}
\newcommand{\red}{\star}
\newcommand{\hsp}{\hspace{-0.5em}}
\newcommand{\rank}{\hbox{\rm rank}\,}
\DeclareMathOperator{\img}{img}
\newcommand{\tab}{\hspace{15pt}}
\definecolor{shadecolor}{RGB}{210,210,250}
\hfill\end{quote}\end{snugshade}}
\newcommand{\style}[1]{{\bf{#1}}\index{{#1}}}    
\newcommand{\Sheaf}{\mathcal F}
\newcommand{\Leray}{\mathcal L}
\newcommand{\Cech}{\mathcal C}
\begin{document}

\begin{abstract}
Sheaves and sheaf cohomology are powerful tools in computational topology, greatly generalizing persistent homology. We develop an algorithm for simplifying the computation of cellular sheaf cohomology via (discrete) Morse-theoretic techniques. As a consequence, we derive efficient techniques for distributed computation of (ordinary) cohomology of a cell complex.
\end{abstract}

\maketitle

\section{Introduction} \label{sec:intro}

\subsection{Computational topology and sheaves}
It has recently become clear that computation of homology of spaces is of critical importance in several applied contexts. These include but are not limited to configuration spaces in robotics \cite{Farber,Farber:graph,G:singapore,CohenKod}, the global qualitative statistics of point-cloud data \cite{carlsson:09,pyramid,EdelsHar}, coverage problems in sensor networks \cite{DG:ijrr,DG:persistence}, circular coordinates for data sets \cite{DMV}, and Conley-type indices for dynamics \cite{kaczynski:mischaikow:mrozek:04,araietc,MM}. The Euler characteristic -- a numerical reduction of homology -- is even more ubiquitous, with applications ranging from Gaussian random fields \cite{Adler,AdlerTaylor} to data aggregation problems over networks \cite{BG2009,BG2010} and signal processing \cite{curry:ghrist:robinson:12}. Not coincidentally, development of applications of homological tools has proceeded symbiotically with the development of good algorithms for computational homology \cite{kaczynski:mischaikow:mrozek:04,EdelsHar}. Among the best of the latter are methods based on (co)reduction preprocessing \cite{mrozek:batko:09a} and discrete Morse theory \cite{harker:mischaikow:mrozek:nanda}.

With the parallel success of new applications and fast computations for homology, additional topological structures and techniques are poised to cross the threshold from theory to computation to application. Among the most promising is the theory of \style{sheaves}. Developed for applications in algebraic topology and matured under a string of breathtaking advances in algebraic geometry, sheaf theory is perhaps best described as a formalization of local-to-global transitions in Mathematics. The margins of this introductory section do not suffice to outline sheaf theory; rather, we present without detailed explanation three principal interpretations of a sheaf $\Sheaf$ over a topological space $\cX$ taking values in $\bR$-modules over some ring $\bR$:
\begin{enumerate}
\item A sheaf can be thought of as a {\em data structure} tethered to a space -- a assignment to open sets $\cV\subset \cU$ of $\cX$ a homomorphism $\Sheaf(\cU)\rightarrow\Sheaf(\cV)$ between $\bR$-modules -- the algebraic ``data'' over the subsets -- in a manner that respects composition and gluing (see \S\ref{sec:background}). Unlike in the case of a bundle, the data sitting atop subsets of $\cX$ can change dramatically from place-to-place.
\item A sheaf can be thought of as a {\em topological space} in and of itself, together with a projection map $\pi\colon\Sheaf\to \cX$ to the base space $\cX$. This {\em \'etale space} topologizes the data structure and motivates examining its topological features, such as (co)homology.
\item A sheaf can be thought of as a {\em coefficient system}, assigning to locations in $\cX$ the spatially-varying $\bR$-module coefficients to be used for computing cohomology. This representation of the space within the algebraic category of $\bR$-modules provides enough structure to compute cohomology with location-dependent coefficients.
\end{enumerate}
It is these multiple interpretations that portend the ubiquity of sheaves within applied topology. Though sheaves have long been recognized as useful data structures within certain branches of Computer Science (e.g., \cite{goguen1992sheaf}), sheaf cohomology has a number of emergent applications. These include:
\begin{enumerate}
\item {\bf Signal processing:} Sheaf cohomology recovers and extends the classical Nyquist-Shannon sampling theorem \cite{robinson2013nyquist}; {\em viz.}, reconstruction from a sample is possible if and only if the appropriate cohomology of an associated ambiguity sheaf vanishes.
\item {\bf Data aggregation:} Data aggregation over a domain can be performed via Euler integrals, an alternating reduction of the cohomology of an associated constructible sheaf over the domain \cite{curry:ghrist:robinson:12}.
\item {\bf Network coding:} Various problems in network coding (maximum throughput, merging of networks, rerouting information flow around a failed subnetwork) have interpretations as ranks of cohomologies of a sheaf over the network \cite{GhristHiraoka}.
\item {\bf Optimization:} The classical max-flow-min-cut theorem has a sheaf-theoretic analogue which phrases flow-values and cut-values as semimodule images of sheaf homology and cohomology respectively \cite{Krishnan,GhristKrishnan}.
\item {\bf Complexity:} A recent parallel to the Blum-Shub-Smale theory of complexity \cite{BSS89} has emerged for constructible sheaves \cite{Basu-2013}.
\end{enumerate}
These early examples of applications vary greatly in terms of the types of coefficients used (ranging from $\Z$ to $\R$-vector spaces to general commutative monoids) and the types of base spaces. In most applications, however, the relevant sheaves are of a particular discrete form. Topological spaces become computationally tractable substances through a discretization process: this most often takes the form of a simplicial or cell (or CW) complex. A similar modulation exists for sheaves -- a sheaf is called {\em constructible} with respect to a given stratification of the base space if the data assigned to each stratum is locally constant. We will work in the category of \style{cellular sheaves}, which are constructible with respect to a fixed regular CW stratification of the base space \cite{EAT}.

Motivated by these applications, we establish algorithms for the computation of sheaf cohomology. Our philosophy, inherited from other work on computational homology \cite{kaczynski:mischaikow:mrozek:04,mrozek:batko:09a,harker:mischaikow:mrozek:nanda} is that of reduction of the input structure to a smaller equivalent structure. We do so by means of \style{discrete Morse theory}, retooling the machinery to work for sheaves.

\subsection{Related and supporting work}

\subsubsection*{Sheaves}

A fair portion of the existing work on computational topology is naturally cast in the language of sheaves, providing novel paths for generalization. For example, Euler integration -- integration with respect to Euler characteristic as a valuation -- is sheaf-theoretic in nature and in origin, as per \cite{schapira1991operations,schapira1995tomography}. It is in fact the decategorification of the cohomology of sheaves associated to constructible functions (\cite{curry:ghrist:robinson:12} gives an exposition of this). More familiar to the reader will be persistent homology \cite{edelsbrunner:letscher:zomorodian:01, zomorodian:carlsson:05, carlsson:09}, which also has a sheaf-theoretic formulation as follows. The formal dual of a sheaf is a \style{cosheaf}; in the cellular category, these are quite useful \cite{EAT,DMP} and possess a homology theory \cite{curry}. The persistent homology of a filtration is the homology of a (certain) cosheaf over a cell complex homeomorphic to an interval \cite{curry}. Recent work on {\em well groups} associated to persistent homology has been expounded in terms of sheaves \cite{macpherson:patel}.


\subsubsection*{Discrete Morse theory:}
Discrete Morse theory \cite{forman98, chari} usually begins with the structure of a partial matching on the cells of a CW complex. The unmatched cells serve the same role as critical points do in smooth Morse theory while the matched cells furnish gradient-like trajectories between them. A Morse cochain  complex may be constructed from this data: its cochain groups are freely generated by the critical cells and the boundary operators may be derived from gradient paths. The fundamental result is that the Morse cochain complex so obtained is homologically equivalent to the original CW complex.

This basic idea has since been vastly generalized and adapted to purely algebraic situations  \cite{skoldberg, batzies:welker:00,kozlov05} with only the slightest vestige of its topological origins. One can impose a partial matching directly on the basis elements of a cochain complex and apply discrete Morse theory as usual. This approach has proved useful in the past when simplifying computation of homology groups of abstract cell complexes \cite{harker:mischaikow:mrozek:nanda} and the persistent homology groups of their filtrations \cite{mischaikow:nanda}.

\subsection{Problem statement and results}

Our problem centers on the computation of cellular sheaf cohomology. The initial inputs are a cellular sheaf $\Sheaf$ over a CW-complex $\cX$ taking values in free $\bR$-modules for some fixed ring $\bR$. This input is reprocessed into a cochain complex $F = (C^\bullet,d^\bullet)$ of free $\bR$-modules parameterized by a graded poset $(X,\leq)$ [see Definition \ref{defn:pos_cplx}]. Our main algorithm, {\tt Scythe} [see \S\ref{sec:algos}], constructs a $\Sheaf$-compatible acyclic matching $\Sigma$ on $(X,\leq)$ and suitably modifies the coboundary operators $d^\bullet$ in order to cut the original cochain complex down to its critical core while preserving its cohomology. The resulting smaller Morse cochain complex $F^\Sigma = (C_\Sigma^\bullet,d_\Sigma^\bullet)$ is parametrized by the poset of critical elements of $\Sigma$.

Let $\prec$ denote the covering relation in our graded poset $(X,\leq)$ and define for each $x \in X$ the set of immediate successors $x_+ = \setof{y \in X \mid x \prec y}$. The following parameters measure different aspects of the complexity of $F$:
\begin{enumerate}
\item let $n$ be the cardinality $|X|$ of the poset $X$,
\item let $p$ equal $\max_{x \in X}\setof{|x_+|}$,
\item assume that the maximum rank of $F(x)$ as an $\bR$-module is $d < \infty$ for $x \in X$,
\item assume that the matching $\Sigma$ produced by {\tt Scythe} has $m_k$ critical elements of dimension $k$ and define $\tilde{m} = \sum_km_k^2$, and
\item define $\omega$ to be the matrix multiplication exponent\footnote{That is, the complexity of composing two $d \times d$ matrices with $\bR$-entries is $\text{O}(d^\omega)$.} over $\bR$.
\end{enumerate}
Note that the first three numbers are input parameters, the fourth is an output parameter and the fifth is purely a property of the underlying coefficient ring $\bR$. Our main result is as follows.

\begin{thm*}
Let $F$ be a cochain complex of free $\bR$-modules over a graded poset $(X,\leq)$ and let $n, p, d, m$ and $\omega$ be the associated parameters defined above. Then, the time complexity of constructing the Morse complex $F^\Sigma$ via {\tt Scythe} is $\mathrm{O}(np\tilde{m}d^\omega)$ and the space complexity is $\mathrm{O}(n^2pd^2)$.
\end{thm*}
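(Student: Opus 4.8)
The plan is to treat \texttt{Scythe} as a sequence of elementary operations over $\bR$ --- multiplications, additions, and inversions of $d \times d$ matrices --- and to count how many of each the algorithm performs, using the fact that a single matrix multiplication or inversion costs $\mathrm{O}(d^\omega)$ while an addition or a block of storage costs $\mathrm{O}(d^2)$. I would isolate two phases: (i) the construction of the $\Sheaf$-compatible acyclic matching $\Sigma$ on $(X,\leq)$, and (ii) the computation of the modified coboundary operators $d_\Sigma^\bullet$ on the critical core. Each phase is analyzed separately and the dominant contribution is then shown to be the stated product.

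For phase (i), I would argue that building $\Sigma$ amounts to a single traversal of the covering relation $\prec$: for each $x \in X$ one inspects the immediate successors $x_+$, of which there are at most $p$, and tests the relevant restriction maps for the invertibility condition that makes a matched pair cancellable without altering cohomology. Since there are $n$ elements with at most $p$ successors each, and each invertibility test is an $\mathrm{O}(d^\omega)$ computation, the matching is produced in $\mathrm{O}(np\,d^\omega)$ time --- strictly dominated by the bound we are after.

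Phase (ii) is the heart of the estimate. Here I would describe $d_\Sigma^\bullet$ as the result of reducing $d^\bullet$ across the matched pairs: eliminating a matched pair uses the invertible restriction on that pair to clear the coboundary entries of the cells it touches, a Schur-complement-style step that is a constant number of $\mathrm{O}(d^\omega)$ matrix operations but that may introduce fill-in. The key combinatorial lemma to establish is that the total number of such matrix operations is controlled by $np\,\tilde{m}$: the factor $np$ is the number of nonzero coboundary blocks one must sweep in the sparse input, while $\tilde{m} = \sum_k m_k^2$ bounds the work attributable to the surviving critical entries dimension-by-dimension, using that the number of critical source--target pairs in adjacent degrees satisfies $\sum_k m_k m_{k+1} \leq \sum_k m_k^2$. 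Multiplying by the $\mathrm{O}(d^\omega)$ cost per operation yields the time bound $\mathrm{O}(np\,\tilde{m}\, d^\omega)$.

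For the space bound I would account for the coboundary matrices held in memory during the reduction. The input occupies $\mathrm{O}(np)$ blocks, but each elimination can densify the surrounding rows and columns; bounding the accumulated fill-in over all $\mathrm{O}(n)$ eliminations by $\mathrm{O}(n^2 p)$ blocks, each a $d\times d$ matrix requiring $\mathrm{O}(d^2)$ storage, gives $\mathrm{O}(n^2 p\, d^2)$. The main obstacle throughout is exactly this bookkeeping of fill-in: one must invoke the acyclicity of $\Sigma$ both to guarantee that the eliminations terminate with invertible pivots and to prevent the proliferation of gradient-path contributions from exceeding the $\tilde{m}$ and $n^2p$ budgets. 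I expect the careful per-dimension counting that converts the raw path sums into these polynomial bounds --- rather than any single matrix estimate --- to be the genuinely delicate part of the argument.
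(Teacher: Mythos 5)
Your high-level framing --- count $\mathrm{O}(d^\omega)$ matrix operations and bound their number by $np\tilde m$ --- is the right one, but the counting argument you sketch in phase (ii) is not the one that works, and you have explicitly left the decisive step unproven. Two concrete problems. First, eliminating a matched pair $(x^\red,y^\red)$ is \emph{not} ``a constant number of $\mathrm{O}(d^\omega)$ matrix operations'': the update $F^\red_{wz} = F_{wz} - F_{x^\red z}\circ F_{x^\red y^\red}^{-1}\circ F_{wy^\red}$ must be applied to every pair $(w,z)$ with $w\prec y^\red$ and $x^\red\prec z$, i.e.\ $|y^\red_-|\cdot|x^\red_+|$ matrix operations, and controlling $|y^\red_-|$ is precisely where one factor of $m_k$ enters. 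The paper's key observation is that {\tt Scythe} only matches $(x,y)$ when $x$ is the \emph{unique remaining non-critical} face of $y$; hence at the moment of reduction $y_-\setminus\{x\}$ consists entirely of critical cells of dimension $k=\dim x$ and so has cardinality at most $m_k$, giving a cost of $\mathrm{O}(pm_kd^\omega)$ per call to {\tt ReducePair}. Second, your decomposition of $\tilde m$ via ``critical source--target pairs in adjacent degrees'' and the inequality $\sum_k m_km_{k+1}\le\sum_k m_k^2$ is not where $\tilde m$ comes from. In the paper, $\sum_k m_k^2$ arises because the outer loop executes once per critical cell ($m_k$ times in dimension $k$), each such iteration runs an inner breadth-first sweep of at most $n$ steps, and each step's reduction costs $\mathrm{O}(pm_kd^\omega)$ by the observation above; summing $m_k\cdot npm_kd^\omega$ over $k$ gives $\mathrm{O}(np\tilde m d^\omega)$. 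This in turn relies on a queueing discipline you never mention (elements are flagged against re-enqueueing within one outer iteration, and only cells of dimension $\dim c+1$ are enqueued), without which the inner loop is not bounded by $n$. Your phase (i) is also a mild misreading of the algorithm: the matching is not built by a separate one-pass scan but is discovered greedily, interleaved with the reductions, since a cell becomes matchable only after its other non-critical faces have been eliminated --- though this does not affect the asymptotics.

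On space, your route genuinely differs from the paper's and is arguably more candid: the paper stores $F$ in $\mathrm{O}(npd^2)$ and attributes the extra factor of $n$ to the queue, whereas you attribute it to fill-in from the new covering relations $w\prec z$ created by each reduction. Your accounting (at most $\mathrm{O}(n)$ eliminations, each creating at most $\mathrm{O}(np)$ new $d\times d$ blocks) does reach $\mathrm{O}(n^2pd^2)$ and addresses a real phenomenon the paper glosses over. But the time bound, as you have set it up, does not go through without the two ingredients identified above, and since you yourself defer ``the careful per-dimension counting'' to future work, the heart of the proof is still missing.
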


Section \ref{sec:background} contains background material on cellular sheaf theory and the fundamentals of discrete Morse theory. In \S\ref{sec:equiv} we provide explicit chain maps that induce isomorphisms on cohomology between the original and reduced complexes. Section \ref{sec:algos} contains a description of the algorithm {\tt Scythe}, a verification of its correctness and also a detailed complexity analysis which proves our main theorem above. Finally, in \S\ref{sec:apps} we develop distributed protocols for calculating traditional cohomology groups of a given space by recasting the computations in appropriate sheaf-theoretic frameworks.


\section{Background}\label{sec:background}

In this section we survey preliminary material pertaining to cellular sheaves \cite{shepard,curry,vybornov:2012} and a purely algebraic version of discrete Morse theory \cite{skoldberg,batzies:welker:00,kozlov05}. Throughout this paper, $\bR$ denotes a fixed coefficient ring with identity $1_\bR$ while $\N$ and $\Z$ denote the natural numbers and integers respectively.

\subsection{Cellular Sheaves and their Cohomology}

Let $\cX$ be a finite regular CW complex consisting of cells and their attaching maps \cite{munkres, spanier}. For each $n \in \N$ the subcollection of $n$-dimensional cells will be  written $\cX^n$. Given cells $\sigma$ and $\tau$ of $\cX$, we write $\sigma \leq \tau$ to indicate the face relation in $\cX$.
Finally, for each pair of cells $\sigma$ and $\tau$ in $\cX$, the quantity $[\sigma:\tau] \in \Z$ is defined to equal
\begin{itemize}
\item $+1$ if $\sigma \leq \tau$, $\dim \sigma = \dim \tau - 1$, and the local orientations of their attaching maps agree;
\item $-1$ if $\sigma \leq \tau$, $\dim \sigma = \dim \tau - 1$, and the local orientations disagree; and
\item $0$ otherwise.
\end{itemize}
It follows from the usual boundary operator axiom that the following relation must hold across each pair of cells $\sigma, \tau \in \cX$:
\begin{align}
\label{eqn:orientation}
\sum_{\sigma \leq \lambda \leq \tau} \hsp [\sigma:\lambda][\lambda:\tau] = 0.
\end{align}

\begin{defn}
\label{defn:cell_sheaf}
{\em
A {\em cellular sheaf} $\Sheaf$ over $\cX$ assigns to each cell $\sigma$ of $\cX$ an $\bR$-module $\Sheaf(\sigma)$ and to each face relation $\sigma \leq \tau$ an $\bR$-linear {\em restriction map} $\Sheaf_{\sigma\tau}:\Sheaf(\sigma) \to \Sheaf(\tau)$ subject to the following compatibility condition: whenever $\sigma \leq \lambda \leq \tau$ in $\cX$, we have $\Sheaf_{\lambda\tau} \circ \Sheaf_{\sigma\lambda} = \Sheaf_{\sigma\tau}$.
}
\end{defn}

Simple examples of sheaves include the following:
\begin{enumerate}
\item The {\em constant sheaf}, $\bR_\cX$, assigns the coefficient ring $\bR$ to each cell of $\cX$ and the identity restriction map $1_\bR:\bR \to \bR$ to each face relation.
\item The {\em skyscraper sheaf} over a single cell $\sigma$ of $\cX$ is a sheaf, $\bR_\sigma$, that evaluates to $\bR$ on $\sigma$ and is zero elsewhere, with all restriction maps being zero.
\item An analogue of the skyscraper sheaf over a subcomplex $\cA\subset\cX$ evaluates to $\bR$ on all cells of $\cA$ and zero elsewhere. The restriction maps are zero except for the identity map from a cell in $\cA$ to a face. This sheaf is best described as the {\em pushforward} $\iota_*\bR_\cA$ of the constant sheaf on $\cA$ induced by the inclusion map $\iota\colon\cA\hookrightarrow\cX$. This is {\em not} the same as the sum of skyscraper sheaves over the cells of $\cA$, since the restriction maps are not all zero.
\end{enumerate}

Given any cellular sheaf $\Sheaf$ on $\cX$, we define the $n$-th {\em cochain group over $\Sheaf$} to be the  direct sum of the $\bR$-modules assigned by $\Sheaf$ to the $n$-dimensional cells. That is,
\[
C^n(\cX;\Sheaf) = \bigoplus_{\sigma \in \cX^n} \Sheaf(\sigma).
\]
The $n$-th {\em coboundary operator} $\delta^n:C^n(\cX;\Sheaf) \to C^{n+1}(\cX;\Sheaf)$ is completely determined by the following block action. Given $\sigma \in \cX^n$ and $\tau \in \cX^{n+1}$, the component of $\delta^n$ from $\Sheaf(\sigma)$ to $\Sheaf(\tau)$ precisely equals $[\sigma:\tau]\Sheaf_{\sigma\tau}$ and so we obtain a sequence of $\bR$-modules
\[
0 \to C^0(\cX;\Sheaf) \mapright{\delta^0} C^1(\cX;\Sheaf) \mapright{\delta^1} C^2(\cX;\Sheaf) \mapright{\delta^2} \cdots
\]
It follows from a routine calculation involving (\ref{eqn:orientation}) and the compatibility condition of Definition \ref{defn:cell_sheaf} that $\delta^{n}\circ\delta^{n-1} = 0$ for all $n \in \N$ and hence that $(C^\bullet(\cX;\Sheaf), \delta^\bullet)$ is a cochain complex.

\begin{defn}
\label{defn:sheaf_cohom}
{\em
Let $\Sheaf$ be a cellular sheaf on $\cX$. The {\em cohomology of $\cX$ with $\Sheaf$ coefficients} is defined to be the cohomology of the cochain complex $(C^\bullet(\cX;\Sheaf), \delta^\bullet)$. More precisely,
\[
H^n(\cX;\Sheaf)  = \frac{\ker \delta^n}{\img \delta^{n-1}}.
\]
}
\end{defn}
The reader may interpret $H^\bullet(\cX;\cF)$ as the cohomology of the data $\cF$ over $\cX$. The simple examples of sheaves listed above have the following cohomologies:
\begin{enumerate}
\item The constant sheaf $\bR_\cX$ on $\cX$ has cohomology $H^\bullet(\cX;\bR_\cX)\cong H^\bullet(\cX;\bR)$ equal to ordinary cohomology in $\bR$ coefficients.
\item The skyscraper sheaf $\bR_\sigma$ on $\cX$ has cohomology $H^k(\cX;\bR_\sigma)\cong \bR$ when $k = \dim \sigma$ and zero otherwise, illustrating that a sheaf can have trivial cohomology even if the underlying base space is noncontractible.
\item The pushforward sheaf $\iota_*\bR_\cA$ has cohomology $H^\bullet(\cX;\iota_*\bR_\cA)\cong H^\bullet(\cA;\bR)$, illustrating that a sheaf can have complicated cohomology even if the underlying base space is contractible.
\end{enumerate}

Of course, more intricate examples abound and are the impetus for an effective algorithm for computation.

\subsection{Morse Theory for Parametrized Cochain Complexes}

Forman's work on Morse theory for CW complexes \cite{forman98} has been extended to a purely algebraic framework by Batzies and Welker \cite{batzies:welker:00}, Kozlov \cite{kozlov05}, and (in greatest generality) by Sk\"oldberg \cite{skoldberg}. The central idea is to exploit invertible restriction maps in order to produce a smaller cochain complex with isomorphic cohomology. In order to establish notation compatible with an algorithmic treatment, we provide a brief overview of the main results here.

Recall that given two elements $x,y$ in a poset $(X,\leq)$ we say that $y$ {\em covers} $x$ whenever $x < y$ and $\setof{z \in X \mid x < z < y} = \varnothing$. We denote this covering relation by $x \prec y$ and call a poset $(X,\leq)$ {\em graded} if it admits a partition $X = \bigcup_{n \in N}X_n$ into subsets indexed by a dimension so that if $x \prec y$ then $\dim y = \dim x + 1$. All graded posets in sight are assumed to be finite\footnote{When striving for greater generality, one replaces this requirement by the following local finiteness hypothesis on the covering relation: each $x \in X$ can have only finitely many $y$ so that $y \prec x$ or $x \prec y$.}.

\begin{defn}
\label{defn:pos_cplx}
{\em
A {\em parametrization} $F$ of a cochain complex $(C^\bullet,d^\bullet)$ of $\bR$-modules over a graded poset $(X,\leq)$ assigns to each $x \in X$  an $\bR$-module $F(x)$ and to each covering relation $x \prec y$ a linear map $F_{xy}:F(x) \to F(y)$ so that for all dimensions $n \in \N$,
\begin{enumerate}
\item $C^n = \bigoplus_{x \in X_n} F(x)$, and
\item the block of $d^n:C^n \to C^{n+1}$ from $F(x)$ to $F(y)$ is precisely $F_{xy}$.
\end{enumerate}
By convention, we require $F_{xy} = 0$ whenever $x \not\prec y$.
}
\end{defn}

Cochain complexes parametrized over posets are the basic objects on which discrete Morse theory operates. Before introducing the details, we remark that the cells of a finite regular CW complex $\cX$ comprise a graded poset over which the cochain complex $(C^\bullet(\cX;\Sheaf),\delta^\bullet)$ associated to any sheaf $\Sheaf$ is naturally parametrized. Throughout the remainder of this section, we fix a parametrization $F$ of a cochain complex $(C^\bullet,d^\bullet)$ over a graded poset $(X,\leq)$.

The following definition goes back to the work of Chari \cite{chari}
\begin{defn}
\label{defn:matching}
{\em
A {\em partial matching} on $(X,\leq)$ is a subset $\Sigma \subset X \times X$ of pairs subject to the following axioms:
\begin{enumerate}
\item {\bf dimension:} if $(x,y) \in \Sigma$ then $x \prec y$, and
\item {\bf partition:} if $(x,y) \in \Sigma$ then neither $x$ nor $y$ belong to any other pair in $\Sigma$.
\end{enumerate} Moreover, $\Sigma$ is called {\bf acyclic} if the transitive closure of the relation $\lhd$ defined on pairs in $\Sigma$ by
\[
(x,y) \lhd (x',y') \text{ if and only if } x \prec y',
\]
generates a partial order.
}
\end{defn}

We call an acyclic matching $\Sigma$ on $(X,\leq)$ {\bf compatible} with the parametrization $F$ if for each pair $(x,y) \in \Sigma$ the associated linear map $F_{xy}:F(x) \to F(y)$ is invertible. Let $\Sigma$ be such a compatible acyclic matching on $(X,\leq)$ and denote by $M$ the {\em critical} unpaired elements:
\[
M = \setof{m \in X \mid (m,z) \text{ and } (z,m) \text{ are not in }\Sigma\text{ for any }z \in X}.
\]
A {\em gradient path} $\gamma$ of $\Sigma$ is a strictly $\lhd$-increasing sequence $(x_j,y_j)_1^J \subset \Sigma$ arranged as follows:
\[
\gamma = y_1 \succ x_1 \prec y_2 \succ x_2 \prec \cdots \prec y_J \succ x_J,
\]
and its {\em coindex} $F_\gamma:F(y_1) \to F(x_J)$ is the linear map given by
\begin{align}
\label{eqn:coindex}
F_\gamma =  \left(-F_{x_Jy_J}^{-1}\right) \circ F_{x_{J-1}y_J} \circ \cdots  \circ F_{x_1y_2} \circ \left(-F_{x_1y_1}^{-1}\right).
\end{align}
For each gradient path $\gamma = (x_j,y_j)_1^J$, we write $\src_\gamma = y_1$ and $\tgt_\gamma = x_J$ to indicate the source (first) and target (last) elements. Given critical elements $m, m' \in M$, the path $\gamma$ is said to flow from $m$ to $m'$ whenever the covering relations $m \prec \src(\gamma)$ and $\tgt(\gamma) \prec m'$ both hold; and a new linear map $F^\Sigma_{mm'}:F(m) \to F(m')$ may be defined by:
\begin{align}
\label{eqn:morserest}
F^\Sigma_{mm'} = F_{mm'} + \sum_\gamma F_{\tgt_\gamma m'} \circ F_\gamma \circ F_{m\src_\gamma},
\end{align}
where the sum is taken over all gradient paths $\gamma$ of $\Sigma$ flowing from $m$ to $m'$. If we write $m <_\Sigma m'$ whenever at least one such path exists, then it follows easily from the acyclicity of $\Sigma$ that the transitive closure of $<_\Sigma$ furnishes a partial order on $M$ which is graded by dimension.

\begin{defn}
\label{defn:morsedata}
{\em
The {\em Morse data} associated to $\Sigma$ consists of the poset $(M,\leq_\Sigma)$ of critical elements along with a sequence of $\bR$-modules
\[
0 \to C_\Sigma^0 \mapright{d_\Sigma^0} C_\Sigma^1 \mapright{d_\Sigma^1} C_\Sigma^2 \mapright{d_\Sigma^2} \cdots
\]
where $C^n_\Sigma = \bigoplus_{m \in M_n} F(m)$ and the block of $d_\Sigma^n:C_\Sigma^n \to C_\Sigma^{n+1}$ from $F(m)$ to $F(m')$ is $F^\Sigma_{mm'}$.
}
\end{defn}

The following theorem is (dual to) the main result of algebraic Morse theory.
\begin{thm}[Sk\"oldberg, \cite{skoldberg}]
\label{thm:dmtcohom}
Let $F$ parametrize a cochain complex $(C^\bullet,d^\bullet)$ of $\bR$-modules over a graded poset $(X,\leq)$ and let $\Sigma$ be a compatible acyclic matching. Then, the Morse data $(C_\Sigma^\bullet,d_\Sigma^\bullet)$ is a cochain complex parametrized over $(M,\leq_\Sigma)$ by $F^\Sigma$. Moreover, there are $\bR$-module isomorphisms
\[
H^n(C,d) \cong H^n(C_\Sigma,d_\Sigma),
\]
on cohomology for each dimension $n \in \N$.
\end{thm}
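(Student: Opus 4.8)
The plan is to establish both assertions simultaneously by producing an explicit strong deformation retract of cochain complexes from $(C^\bullet,d^\bullet)$ onto $(C_\Sigma^\bullet,d_\Sigma^\bullet)$: cochain maps $\iota\colon C_\Sigma^\bullet \to C^\bullet$ and $\pi\colon C^\bullet \to C_\Sigma^\bullet$ together with a degree $-1$ homotopy $h\colon C^\bullet \to C^{\bullet-1}$ satisfying $\pi\iota = \id$ and $\iota\pi = \id - (dh+hd)$. Once these exist, part (2) is immediate because a homotopy equivalence induces isomorphisms on cohomology, while part (1) follows because $d_\Sigma^2\pi = \pi d^2 = 0$ forces $d_\Sigma^2 = 0$ by surjectivity of $\pi$, and the blocks of $d_\Sigma = \pi d\iota$ are exactly the maps $F^\Sigma_{mm'}$ by construction. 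The set-up is a degreewise splitting $C^n = M^n \oplus A^n \oplus B^n$, where $M^n = \bigoplus_{m \in M_n} F(m)$ is the critical part, $A^n$ collects the $F(x)$ for lower elements $x$ of matched pairs with $\dim x = n$, and $B^n$ collects the $F(y)$ for upper elements. The matching restricts to a bijection $x \mapsto \mu(x)$ from the indices of $A^n$ onto those of $B^{n+1}$, so the corresponding block $D^n\colon A^n \to B^{n+1}$ of $d^n$ is ``square''.

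The crux is the lemma that each $D^n$ is invertible. Write $D^n = \Delta + N$, where the diagonal part $\Delta$ consists of the matched maps $F_{x\mu(x)}$, invertible by compatibility of $\Sigma$, and the off-diagonal part $N$ collects the blocks $F_{x\mu(x')}$ with $x \prec \mu(x')$ and $x \neq x'$. This is where acyclicity enters decisively: such a nonzero off-diagonal block records a relation $(x,\mu(x)) \lhd (x',\mu(x'))$ between distinct pairs, so once the reflexive instances $(x,\mu(x)) \lhd (x,\mu(x))$ are set aside, the hypothesis that the transitive closure of $\lhd$ is a partial order lets us order the pairs along a linear extension. With respect to this order $N$ is strictly triangular, hence nilpotent, and $D^n = \Delta(\id + \Delta^{-1}N)$ is invertible with $(D^n)^{-1} = \sum_{k\geq 0}(-\Delta^{-1}N)^k \Delta^{-1}$, a finite sum.

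With the inverses in hand I would take $h$ to be $(D^{n-1})^{-1}$ on the summand $B^n$ and zero on $M^n \oplus A^n$ (up to a global sign), and let $\iota$ be the inclusion of $M^\bullet$ into and $\pi$ the projection of $C^\bullet$ onto $M^\bullet$, each corrected by the relevant off-diagonal blocks of $d$ and by the inverses $(D^n)^{-1}$. Checking $\pi\iota = \id$, the homotopy identity, and that $\iota,\pi$ are cochain maps is then routine block algebra, and it produces the Schur complement
\[
d_\Sigma = d_{MM} - d_{MA}\,(D^n)^{-1}\,d_{BM},
\]
where $d_{MM}, d_{MA}, d_{BM}$ are the critical-to-critical, lower-to-critical and critical-to-upper blocks of $d$. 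Substituting the finite Neumann expansion of $(D^n)^{-1}$ and grouping its monomials according to the zig-zag $y_1 \succ x_1 \prec y_2 \succ \cdots$ each one traverses recovers precisely the sum over gradient paths in (\ref{eqn:morserest}), the alternating signs $-F_{x_jy_j}^{-1}$ of the coindex (\ref{eqn:coindex}) being supplied by the factors $-\Delta^{-1}$.

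I expect the principal obstacle to be twofold. First, the invertibility lemma is the only point at which acyclicity genuinely enters, and some care is needed to extract a linear extension and to confirm that $N$ is \emph{strictly} triangular after discarding the reflexive pairs, so that $\Delta^{-1}N$ is truly nilpotent. Second, reconciling the compact ``all pairs at once'' Schur complement with the combinatorial formula (\ref{eqn:morserest}) -- matching each Neumann monomial to a unique gradient path and checking that all signs agree -- is the most error-prone bookkeeping; a tidy alternative is to cancel one matched pair at a time using the single-entry Gaussian elimination step and induct, letting the gradient-path sum accumulate across the iterations.
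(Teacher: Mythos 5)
Your proposal is correct, but it takes a genuinely different route from the paper. You cancel all matched pairs at once: you split $C^n = M^n \oplus A^n \oplus B^n$, prove the block $D^n\colon A^n \to B^{n+1}$ is invertible because acyclicity makes its off-diagonal part nilpotent with respect to a linear extension of the order on $\Sigma$-pairs, and then write a single strong deformation retract whose Schur-complement differential expands via the Neumann series into the gradient-path formula (\ref{eqn:morserest}). The paper instead cancels \emph{one} pair $(x^\red,y^\red)$ at a time: it defines the reduced parametrization by the single-pair elimination formula (\ref{eqn:red_param}), shows in Proposition \ref{prop:samemors} that the Morse data is unchanged by each such reduction (this is where acyclicity enters for the paper, controlling how gradient paths split or merge when a pair is removed), and builds the elementary cochain equivalences $\psi$, $\phi$, $\Theta$ for each step; the global equivalence is the composite. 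The two proofs are a Neumann-series-versus-iterated-Gaussian-elimination tradeoff: yours isolates the conceptual role of acyclicity (it is precisely what makes $D^n$ invertible) and produces the homotopy equivalence in closed form, at the cost of the monomial-to-path bookkeeping you flag; the paper's version never forms or inverts the large matrix $D^n$ and is tailored to the in-place algorithm {\tt ReducePair}, at the cost of having to verify separately that the Morse data is invariant under each reduction. Your concluding remark that one could instead "cancel one matched pair at a time and induct" is, in fact, exactly the paper's proof. The routine verifications you defer (that $\iota$, $\pi$ are cochain maps and that $\iota\pi = \mathrm{id} - dh - hd$) do go through: each reduces to a component of $d^2 = 0$, e.g.\ the $A^{n-1}\to B^{n+1}$ block gives $d_{BM}d_{MA} + D^n d_{AA} + d_{BB}D^{n-1} = 0$, which is precisely the identity needed for the homotopy relation on $B^n$; and the signs match, since a Neumann monomial with $k$ off-diagonal factors carries $(-1)^{k+1}$ after the leading minus of the Schur complement, agreeing with the $J = k+1$ factors of $-F_{x_jy_j}^{-1}$ in the coindex (\ref{eqn:coindex}).
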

In the next section we provide a new proof of Theorem \ref{thm:dmtcohom} by constructing explicit cochain equivalences. This proof leads to a recipe for simplifying cohomology computation for an arbitrary cellular sheaf $\Sheaf$ given the existence of efficient techniques for constructing compatible matchings and the Morse data. One imposes an acyclic matching $\Sigma$ on the graded poset of cells in the underlying regular CW complex $\cX$ so that for each $(\sigma,\tau) \in \Sigma$ the restriction map $\Sheaf_{\sigma\tau}$ is invertible. If the set $M$ of critical cells is much smaller than $X$, then one simply computes the cohomology of the smaller cochain complex $(C_\Sigma^\bullet,d_\Sigma^\bullet)$.

\section{The Cohomological Morse Equivalence}\label{sec:equiv}

Let $F$ be a parametrization for a cochain complex $(C^\bullet,d^\bullet)$ over a graded poset $(X,\leq)$ and assume that $\Sigma$ is a compatible acyclic matching on $X$. We prove Theorem \ref{thm:dmtcohom} via an inductive argument by removing one $\Sigma$-pair at a time from $X$. By suitably updating the parametrization near the removed pair at each step, it is possible to preserve the cohomology until one converges to the Morse parametrization $F^\Sigma$ over the poset $(M,\leq_\Sigma)$ of critical elements.

\subsection{The Reduction Step} \label{ssec:red}

The central idea of reducing a cell pair from a CW complex while preserving its homotopy type (and hence, its cohomology) goes back to the work of Whitehead on combinatorial homotopy \cite{whitehead}. Here we present a suitable version of this reduction step adapted for cellular sheaves and efficient algorithms.

Fix $(x^\red,y^\red) \in \Sigma$ and define $X^\red = X \setminus \setof{x^\red,y^\red}$. A graded partial order $\leq_\red$ may be defined on $X^\red$ via the following covering relation: given any cells $w$ and $z$ in $X^\red$, we have $w \prec_\red z$ if either $w \prec z$ in $X$ or if $w \prec y^\red \succ x^\red \prec z$ in $X$. One obtains a new parametrization $F^\red$ over the reduced poset $(X^\red,\leq_\red)$ as follows: $F^\red(w) = F(w)$ for all $w \in X^\red$, and for each covering relation $w \prec_\red z$ we have the linear map $F^\red_{wz}:F(w) \to F(z)$ given by
\begin{align}
\label{eqn:red_param}
F^\red_{wz} = F_{wz} - F_{x^\red z} \circ F_{x^\red y^\red}^{-1} \circ F_{w y^\red}.
\end{align}
A routine calculation shows that $F^\red$ parametrizes a cochain complex which we denote by $(C_\red^\bullet,d_\red^\bullet)$; moreover, $\Sigma$ restricts to an acyclic matching $\Sigma^\red$ on $(X^\red,\leq_\red)$.

\begin{prop}
\label{prop:samemors}
Given the restricted acyclic matching $\Sigma^\red$ defined above,
\begin{enumerate}
\item $\Sigma^\red$ is compatible with the reduced parametrization $F^\red$, and
\item the Morse data associated to $\Sigma^\red$ is identical to that of $\Sigma$.
\end{enumerate}
\end{prop}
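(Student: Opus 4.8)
The plan is to prove the two claims separately, letting the acyclicity of $\Sigma$ drive both. Write the correction term in (\ref{eqn:red_param}) as $K_{wz} = F_{x^\red z} \circ F_{x^\red y^\red}^{-1} \circ F_{wy^\red}$, so that $F^\red_{wz} = F_{wz} - K_{wz}$, and note that $K_{wz} \neq 0$ forces both $w \prec y^\red$ and $x^\red \prec z$ (otherwise an outer factor vanishes by the convention $F_{ab} = 0$ for $a \not\prec b$). For part (1), fix $(w,z) \in \Sigma^\red = \Sigma \setminus \setof{(x^\red,y^\red)}$; the partition axiom makes $w,z,x^\red,y^\red$ pairwise distinct. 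If $K_{wz} \neq 0$ then $w \prec y^\red$ and $x^\red \prec z$, giving $(w,z) \lhd (x^\red,y^\red)$ and $(x^\red,y^\red) \lhd (w,z)$ — a two-cycle contradicting acyclicity. Hence $F^\red_{wz} = F_{wz}$, which is invertible since $\Sigma$ is compatible with $F$; so $\Sigma^\red$ is compatible with $F^\red$.

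For part (2), the critical set is immediately unchanged: deleting the matched pair $(x^\red,y^\red)$ alters no other cell's matched status, so $M$ is common to $\Sigma$ and $\Sigma^\red$. The heart of the matter is a single identity, namely that $K$ is exactly a coindex contracted through the removed pair. Concretely, if a gradient path of $\Sigma$ contains $(x^\red,y^\red)$ as its $k$-th pair, the three consecutive factors $F_{x^\red y_{k+1}} \circ (-F_{x^\red y^\red}^{-1}) \circ F_{x_{k-1} y^\red}$ of its coindex (\ref{eqn:coindex}) equal $-K_{x_{k-1} y_{k+1}}$, precisely the correction attached to the shortcut relation $x_{k-1} \prec_\red y_{k+1}$; the same identity absorbs the removed pair into the source attachment, the target attachment, or the direct term $F^\red_{mm'}$ when it sits first, last, or alone in the path. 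I will use this to set up a fiber-wise correspondence $\Psi$ from gradient paths of $\Sigma$ to gradient paths of $\Sigma^\red$ (together with a formal symbol for the direct term): a path avoiding the removed pair passes to $\Sigma^\red$ unchanged, while a path containing it — at most once, since gradient paths are strictly $\lhd$-increasing — contracts that pair to one shortcut relation. Expanding every $F^\red$ factor in the $\Sigma^\red$ restriction map (\ref{eqn:morserest}) as $F - K$ then reproduces, monomial by monomial, the contributions of exactly the $\Sigma$-paths lying over each $\Sigma^\red$-path; in particular $m <_\Sigma m'$ if and only if $m <_{\Sigma^\red} m'$, so the Morse posets coincide and the restriction maps agree term by term.

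The step I expect to be the main obstacle is controlling this expansion. Expanding two or more $F^\red$ factors at their $K$-terms simultaneously would manufacture monomials that insert the removed pair more than once and correspond to no genuine $\Sigma$-path, so I must show these never survive. They do not: if two of the up-moves or attachment maps along one $\Sigma^\red$-path genuinely factored through $(x^\red,y^\red)$, then threading the intervening (matched) pairs would produce a chain $(x^\red,y^\red) \lhd \cdots \lhd (x^\red,y^\red)$ in the $\lhd$-order, once more contradicting acyclicity. Thus at most one correction is ever active along a given $\Sigma^\red$-path and the multi-insertion monomials never appear. Discharging this point, together with the routine verification that contraction carries $\prec_\red$-covering relations to valid zigzags and preserves sources, targets and gradings, completes the proof.
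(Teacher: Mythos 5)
Your proposal is correct and follows essentially the same route as the paper: part (1) is the identical acyclicity argument showing the correction term vanishes on matched pairs, and part (2) is the same path-correspondence driven by the observation that the removed pair can be inserted into a given gradient path in at most one position, so that each $F^\red$-coindex equals the sum of the coindices of the $\Sigma$-paths lying over it. Your treatment is if anything slightly more explicit than the paper's about the boundary cases (insertion at the source or target attachment, or into the direct term) and about why multiple simultaneous $K$-expansions cannot occur.
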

\begin{proof}
In fact, for any $(x,y) \in \Sigma^\red$ there is an equality $F_{xy} = F^\red_{xy}$ by (\ref{eqn:red_param}) -- otherwise, we violate the acyclicity of $\Sigma$ as follows. By (\ref{eqn:red_param}) the non-zeroness of $F^\red_{xy} - F_{xy}$ implies that $F_{xy^\red}$ and $F_{x^\red y}$ do not vanish, which leads to the contradiction $(x,y) \lhd (x^\red,y^\red) \lhd (x,y)$. To prove the second assertion, first note that the critical elements of $\Sigma$ and $\Sigma^\red$ are identical; and since $F(m) = F^\red(m)$ for each critical $m$,  one obtains an equality of cochain groups $C_\Sigma^n = C_{\Sigma^\red}^n$ for each dimension $n \in N$. Thus, we turn our attention to the linear maps $d_\Sigma$ and $d_{\Sigma^\red}$. Given any gradient path $\gamma^\red$ of $\Sigma^\red$, say
\[
\gamma^\red = y_1 \succ_\red x_1 \prec_\red \cdots \prec_\red y_J \succ_\red x_J,
\]
it follows by acyclicity of $\Sigma$ that there is at most one index $j \in \setof{1,\ldots,J-1}$ for which we may have $(x_j,y_j) \lhd (x^\red,y^\red) \lhd (x_{j+1},y_{j+1})$. Returning to our path $\gamma^\red$, we therefore conclude that there are only two possibilities. Either there is no index $j$ at which the removed pair $(x,y)$ might fit, in which case $\gamma^\red$ is also a path of $\Sigma$ with $F_{\gamma^\red} = F^\red_{\gamma^\red}$ from (\ref{eqn:coindex}). Alternately, there is a single such index $j$, in which case $\Sigma$ may have as its paths both $\gamma^\red$ and the unique augmented path $\gamma$ given by introducing the removed pair $(x^\red,y^\red)$ in the appropriate spot:
\[
\gamma = y_1 \succ x_1 \prec \cdots \prec y_j \succ x_j \prec y^\red \succ x^\red \prec y_{j+1} \succ \cdots \prec y_J \succ x_J.
\]
It follows from a quick calculation that $F^\red_{\gamma^\red} = F_{\gamma^\red} + F_\gamma$. In both cases, the sum of coindices over all paths (and hence each block of $d^\bullet_\Sigma$) is preserved. Using this information in (\ref{eqn:morserest}) and Definition \ref{defn:morsedata} concludes the argument.
\end{proof}

As a consequence of this proposition, the Morse complex $(C_\Sigma^\bullet,d_\Sigma^\bullet)$ remains invariant under the reduction step. It remains to show that cohomology is preserved when passing from $F$ to the reduced parametrization $F^\red$.

\subsection{Cochain Equivalences} \label{ssec:coch}

For each $n \in N$, define the linear map $\psi^n:C^n \to C_\red^n$ by the following block action. For $w \in X_n$ and $z \in X^\red_n$, the block $\psi_{wz}:F(w) \to F(z)$ is given by:
\begin{align}
\label{eqn:psi}
\psi_{wz} = \begin{cases}
																  - F_{x^\red z} \circ F_{x^\red y^\red}^{-1}	& w = y^\red, \\
																	\text{id}_{F(w)}																							& w = z, \\
																	0 																																		& \text{otherwise.}
														 \end{cases}
\end{align}

\begin{lem}
\label{lem:psicoch}
$\psi^\bullet : C^\bullet \to C_\red^\bullet$ is a cochain map. That is, $\psi^{n+1} \circ d^n = d_\red^n \circ \psi^n$ for each $n \in \N$.
\end{lem}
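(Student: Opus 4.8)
The plan is to verify the identity blockwise. Both $\psi^{n+1}\circ d^n$ and $d_\red^n\circ\psi^n$ are $\bR$-linear maps $C^n\to C_\red^{n+1}$, so it suffices to compare, for every $w\in X_n$ and every $z\in X^\red_{n+1}$, the $(w,z)$-block $F(w)\to F(z)$ of each composite. Expanding these via Definition \ref{defn:pos_cplx}, the formula (\ref{eqn:psi}) for $\psi$, the reduced parametrization (\ref{eqn:red_param}), and the convention $F_{xy}=0$ unless $x\prec y$, one obtains
\[
(\psi^{n+1}\circ d^n)_{wz}=\sum_{u\in X_{n+1}}\psi_{uz}\circ F_{wu},
\qquad
(d_\red^n\circ\psi^n)_{wz}=\sum_{v\in X^\red_n}F^\red_{vz}\circ\psi_{wv}.
\]
Throughout I would track dimensions: since $x^\red\prec y^\red$, writing $k=\dim x^\red$ gives $\dim y^\red=k+1$, and many blocks vanish automatically because $F_{ab}\neq 0$ forces $\dim b=\dim a+1$.

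First I would simplify the left-hand sum, valid for every $w\in X_n$. Only the summands $u=z$ and $u=y^\red$ survive in (\ref{eqn:psi}), so the left block is always
\[
(\psi^{n+1}\circ d^n)_{wz}=F_{wz}-F_{x^\red z}\circ F_{x^\red y^\red}^{-1}\circ F_{w y^\red};
\]
for $w\notin\setof{x^\red,y^\red}$ this is precisely $F^\red_{wz}$ from (\ref{eqn:red_param}). Next I would run a short case analysis on $w$. If $w\notin\setof{x^\red,y^\red}$, then $\psi_{wv}=\id$ only for $v=w$, so the right-hand side collapses to $F^\red_{wz}$ and matches the left. If $w=x^\red$, then $\psi_{x^\red v}=0$ for all $v\in X^\red_n$, so the right-hand side is zero; on the left the $u=z$ and $u=y^\red$ terms cancel through $\left(-F_{x^\red z}\circ F_{x^\red y^\red}^{-1}\right)\circ F_{x^\red y^\red}=-F_{x^\red z}$, so both sides vanish.

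The main obstacle is the final case $w=y^\red$ (forcing $n=k+1$), where the coboundary condition $d^{n+1}\circ d^n=0$ must finally be invoked. Here $\psi_{y^\red v}=-F_{x^\red v}\circ F_{x^\red y^\red}^{-1}$, so after expanding $F^\red_{vz}$ and factoring, the right-hand block becomes
\[
(d_\red^n\circ\psi^n)_{y^\red z}=-\Bigl(\sum_{v\in X^\red_n}F_{vz}\circ F_{x^\red v}\Bigr)\circ F_{x^\red y^\red}^{-1},
\]
the cross term dropping out because $F_{v y^\red}=0$ when $\dim v=k+1$. The displayed sum is exactly the relation $\sum_{u\in X_{k+1}}F_{uz}\circ F_{x^\red u}=0$ (the $d\circ d=0$ identity based at $x^\red$) with its single $u=y^\red$ term separated off, so it equals $-F_{y^\red z}\circ F_{x^\red y^\red}$. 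Substituting collapses the right-hand side to $F_{y^\red z}$, while on the left only $u=z$ contributes (since $F_{y^\red y^\red}=0$), also giving $F_{y^\red z}$.

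I expect the only genuinely nontrivial point to be this last case: recognizing that one application of $d\circ d=0$, localized to the coboundary emanating from $x^\red$, is exactly what makes the two sides agree. Every other step is bookkeeping governed by the grading, namely the repeated use of the fact that $F_{ab}$ is supported on covering pairs. Assembling the four cases over all admissible $(w,z)$ then establishes $\psi^{n+1}\circ d^n=d_\red^n\circ\psi^n$ for each $n$, completing the proof that $\psi^\bullet$ is a cochain map.
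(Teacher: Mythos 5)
Your proof is correct and takes essentially the same route as the paper's: a blockwise comparison in which the left side always equals $F_{wz}-F_{x^\red z}\circ F_{x^\red y^\red}^{-1}\circ F_{wy^\red}$, the right side matches by bookkeeping except when $w=y^\red$, and that final case is settled by the $(x^\red,z)$-block of $d\circ d=0$ together with the vanishing of the cross term (which the paper kills via the covering relation $x^\red\prec y^\red$ rather than your equivalent dimension count). The only blemish is a harmless index slip: the relation you invoke is a block of $d^{n}\circ d^{n-1}=0$ (equivalently $d^{k+1}\circ d^{k}$ with $k=\dim x^\red$), not of $d^{n+1}\circ d^{n}$; your displayed sum already carries the correct indices.
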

\begin{proof}
Given $w \in X_n$ and $z \in X^\red_{n+1}$ we show that the blocks of $\psi^{n+1}\circ d^n$ and $d_\red^n\circ \psi^n$ from $F(w)$ to $F^\red(z) = F(z)$ are identical. More precisely, we wish to establish the following:
\[
\sum_{w' \in X_{n+1}} \hsp \psi_{w'z} \circ F_{ww'} =  \hsp  \sum_{z' \in X^\red_n} F^\red_{z' z} \circ \psi_{w z'}.
\]
By (\ref{eqn:psi}) we note that the left side is nonzero only for $w' = z$ or for $w' = y^\red$. Combining these contributions, the left side evaluates to $F_{wz} + \psi_{y^\red z}\circ F_{wy^\red}$ which equals $F^\red_{wz}$. Similarly, the right side of the identity above also reduces to $F^\red_{wz}$ immediately at least when $w \neq y^\red$, so it now suffices to show that this right side equals $F^\red_{y^\red z}$ whenever $w = y^\red$. In this case, we calculate
\[
\sum_{z' \in X^\red_n} F^\red_{z' z} \circ \psi_{y^\red z'} = -\hsp \sum_{z' \in X^\red_n} F^\red_{z'z} \circ F_{x^\red z'} \circ F_{x^\red y^\red}^{-1}
\]
Expanding $F^\red_{z'z}$ via (\ref{eqn:red_param}) and distributing terms gives
\[
- \hsp \sum_{z' \in X^\red_n}  F_{z'z} \circ F_{x^\red z'}\circ F_{x^\red y^\red}^{-1}
+ \hsp \sum_{z' \in X^\red_n}  F_{x^\red z} \circ F_{x^\red y^\red}^{-1} \circ F_{z'y^\red} \circ F_{x^\red z'} \circ  F_{x^\red y^\red}^{-1}.
\]
The second sum is zero: since $x^\red \prec y^\red$ by Definition \ref{defn:matching}, there is no $z' \in X^\red$ satisfying $x^\red \prec z' \prec y^\red$ and so the summand is always trivial. Finally, one can use the fact that $d^\bullet$ is a coboundary operator -- in particular, that $\sum_{z' \in X_n} F_{z'z} \circ F_{x^\red z'} = 0$ -- to show that the first sum equals $F^\red_{y^\red z}$ as desired.

\end{proof}

We now require a cochain map in the other direction. To this end, define $\phi^n:C_\red^n \to C^n$ by the following block action $\phi_{zw}: F(z) \to F(w)$ for each $z \in X^\red_n$ and $w \in X_n$:
\begin{align}
\label{eqn:phi}
\phi_{zw}      = \begin{cases}
														-  F_{x^\red y^\red}^{-1} \circ F_{z y^\red} & w = x^\red, \\
														\text{id}_{F(w)} & w = z, \\
														0 & \text{otherwise}.
												\end{cases}
\end{align}

\begin{lem}
\label{lem:phicoch}
$\phi^\bullet : C_\red^\bullet \to C^\bullet$ is a cochain map. That is, $\phi^{n+1} \circ d_\red^n = d^n \circ \phi^n$ for each $n \in \N$.
\end{lem}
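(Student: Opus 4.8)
The plan is to mirror the blockwise strategy of Lemma \ref{lem:psicoch}, now reading the defining cases of $\phi$ in (\ref{eqn:phi}) in place of those of $\psi$. Fixing a source index $z \in X^\red_n$ and a target index $w \in X_{n+1}$, I would compare the $(z,w)$-blocks of the two composites; concretely, the identity $\phi^{n+1}\circ d_\red^n = d^n \circ \phi^n$ reduces to verifying
\[
\sum_{z' \in X^\red_{n+1}} \phi_{z'w}\circ F^\red_{zz'} = \sum_{w' \in X_n} F_{w'w}\circ \phi_{zw'}
\]
for every such pair $z,w$. First I would simplify the right-hand side, where $\phi$ is applied before $d^n$: by (\ref{eqn:phi}) the only surviving summands are $w' = z$, contributing $F_{zw}$, and $w' = x^\red$, contributing $-F_{x^\red w}\circ F_{x^\red y^\red}^{-1}\circ F_{zy^\red}$, so this side collapses to $F_{zw} - F_{x^\red w}\circ F_{x^\red y^\red}^{-1}\circ F_{zy^\red}$, which is exactly $F^\red_{zw}$ by (\ref{eqn:red_param}) whenever $w \in X^\red$.

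On the left-hand side, (\ref{eqn:phi}) shows $\phi_{z'w}$ is nonzero only for $z' = w$ or $z' = x^\red$; since $z' \in X^\red_{n+1}$ never equals $x^\red$, the behaviour is dictated entirely by the target $w$. For $w \in X^\red_{n+1}$ only the $z' = w$ term remains, yielding $F^\red_{zw}$ and matching the right-hand side at once. The degenerate target $w = y^\red$ is equally painless: the left-hand side is empty and hence zero, while the right-hand side telescopes, the $w' = x^\red$ term cancelling the diagonal one to give $F_{zy^\red} - F_{x^\red y^\red}\circ F_{x^\red y^\red}^{-1}\circ F_{zy^\red} = 0$.

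The only substantive case, and the expected obstacle, is $w = x^\red$ — precisely dual to the role of $w = y^\red$ in Lemma \ref{lem:psicoch}, and the sole place where the chain-complex hypothesis is genuinely needed. Here the left-hand side reads $-F_{x^\red y^\red}^{-1}\circ \sum_{z'} F_{z'y^\red}\circ F^\red_{zz'}$; after expanding $F^\red_{zz'}$ via (\ref{eqn:red_param}) one meets a quadratic correction carrying the factor $F_{x^\red z'}$, and I would argue this drops out on dimensional grounds, since $x^\red \prec z'$ is impossible for $z' \in X^\red_{n+1}$ when $\dim x^\red = n+1$. What survives is $-F_{x^\red y^\red}^{-1}\circ \sum_{z' \in X^\red_{n+1}} F_{z'y^\red}\circ F_{zz'}$, and the key move is to invoke $d^{n+1}\circ d^n = 0$: isolating the $u = x^\red$ summand in $\sum_{u \in X_{n+1}} F_{uy^\red}\circ F_{zu} = 0$ gives $\sum_{z'}F_{z'y^\red}\circ F_{zz'} = -F_{x^\red y^\red}\circ F_{zx^\red}$, whence the left-hand side simplifies to $F_{zx^\red}$. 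Since the right-hand side in this case also reduces to $F_{zx^\red}$ (its correction term is absent because $x^\red \notin X_n$), the two blocks agree and the verification closes. I expect this last case to be the crux: both the dimensional vanishing of the quadratic term and the correct bookkeeping of the $d\circ d = 0$ relation must be handled with care, whereas every other block is immediate.
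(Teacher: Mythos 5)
Your argument is correct and matches the paper's proof essentially step for step: the same blockwise reduction, the same collapse of the right-hand side to $F^\red_{zw}$, and the same handling of the crux case $w = x^\red$ by killing the quadratic correction and invoking $d\circ d=0$ with the $z'=x^\red$ term isolated. The only cosmetic differences are that you justify the vanishing of the quadratic term on dimensional grounds rather than by the absence of any $z'$ with $x^\red \prec z' \prec y^\red$, and you spell out the degenerate case $w = y^\red$ that the paper leaves implicit.
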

\begin{proof}
The argument proceeds very similarly to the one in the proof of Lemma \ref{lem:psicoch}. Given $z \in X^\red_n$ and $w \in X_{n+1}$, we establish a block-equivalence by showing that the following identity holds:
\[
\sum_{z' \in X^\red_{n+1}} \hsp \phi_{z'w} \circ F^\red_{zz'} =  \hsp  \sum_{w' \in X_n} F_{w'w} \circ \phi_{zw'}.
\]
By (\ref{eqn:phi}) we note that the right side is nontrivial only when $w' = x^\red$ or when $w'=z$, and hence it reduces to $F_{zw} + F_{x^\red z}\circ \phi_{zx^\red}$, which equals $F^\red_{zw}$. The left side also evaluates to the same quantity whenever it is nontrivial provided that $w \neq x^\red$. On the other hand, if $w = x^\red$ then the left side becomes
\[
\sum_{z' \in X^\red_{n+1}} \hsp \phi_{z'x^\red} \circ F^\red_{zz'} = - \hsp \sum_{z' \in X^\red_{n+1}} \hsp F_{x^\red y^\red}^{-1} \circ F_{z' y^\red} \circ F^\red_{zz'}.
\]
Expanding $F^\red_{zz'}$ via (\ref{eqn:red_param}) and distributing terms yields
\[
- \hsp \sum_{z' \in X^\red_{n+1}} \hsp F_{x^\red y^\red}^{-1} \circ F_{z' y^\red} \circ F_{zz'}
+ \hsp \sum_{z' \in X^\red_{n+1}} \hsp F_{x^\red y^\red}^{-1} \circ F_{z' y^\red} \circ F_{x^\red z'} \circ F_{x^\red y^\red}^{-1} \circ F_{zy^\red}.
\]
The second sum above is always zero, since $(x^\red,y^\red) \in \Sigma$ implies $x^\red \prec y^\red$ and hence there is no $z' \in X^\red$ with $x^\red \prec z' \prec y^\red$. Finally, the first sum reduces to $F^\red_{x^\red z}$ since $d^\bullet$ is a coboundary operator, and hence $\sum_{z' \in X_{n+1}} F_{z' y^\red} \circ F_{zz'} = 0$.
\end{proof}

It is easy to verify that $\psi^n \circ \phi^n$ is the identity map on $C_\red^n$ for each $n \in \N$, so in order to conclude that $\psi^\bullet$ and $\phi^\bullet$ are cochain equivalences it suffices to construct a cochain homotopy $\Theta^n:C^n \to C_\red^{n-1}$ between $\phi^n \circ \psi^n$ and the identity on $C^n$. The following result completes our proof of Theorem \ref{thm:dmtcohom}.

\begin{lem}
The linear maps $\Theta^n:C^n \to C^{n-1}$ defined by the block action
\[
\Theta_{ww'}    = \begin{cases}
																F_{x^\red y^\red}^{-1} & w' = x^\red \text{ \em and } w = y^\red,\\
																0 & \text{\em otherwise,}
														\end{cases}
\]
constitute a cochain homotopy between $\phi^n\circ\psi^n$ and the identity on $C^n$ for each dimension $n \in \N$.
\end{lem}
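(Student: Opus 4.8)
The plan is to verify, one matrix block at a time, the cochain homotopy identity
\[
\mathrm{id}_{C^n} - \phi^n\circ\psi^n = d^{n-1}\circ\Theta^n + \Theta^{n+1}\circ d^n
\]
for every dimension $n \in \N$; this exhibits $\phi^\bullet\circ\psi^\bullet$ and the identity as cochain homotopic (the overall sign is a matter of convention and does not affect the conclusion). Fixing $w,w' \in X_n$, I would compute the block from $F(w)$ to $F(w')$ of each of the three composite operators appearing above, reading everything directly off the defining formulas (\ref{eqn:psi}), (\ref{eqn:phi}), (\ref{eqn:red_param}) together with the block definition of $\Theta$, and then check that the two sides agree. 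Since $\phi^n\psi^n$ lands back in $C^n$ rather than in $C_\red^n$, its blocks expand in terms of $\psi_{wz}$ and $\phi_{zw'}$ without ever reintroducing the reduced maps $F^\red$.

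The key structural simplification is that $\Theta$ is supported in a single dimension: its only nonzero block is $\Theta_{y^\red x^\red} = F_{x^\red y^\red}^{-1}$, which forces $y^\red \in X_n$ and $x^\red \in X_{n-1}$. Consequently $d^{n-1}\circ\Theta^n$ can be nonzero only when $y^\red \in X_n$, while $\Theta^{n+1}\circ d^n$ can be nonzero only when $y^\red \in X_{n+1}$ — that is, in the two adjacent dimensions housing the reduction pair. I would therefore organize the verification as a case analysis on whether $w$ or $w'$ coincides with one of the special cells $x^\red,y^\red$, keeping in mind that $X^\red_n$ omits whichever of these two cells lies in dimension $n$.

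In the generic case $w,w' \in X^\red_n$ the composite $\phi^n\psi^n$ collapses to the single block $\phi_{ww'}$, which is $\mathrm{id}$ when $w = w'$ and $0$ otherwise, so both sides of the identity agree trivially. The two substantive off-diagonal blocks are $(y^\red,w')$ with $w' \in X^\red_n$, where $\phi^n\psi^n$ contributes exactly $-F_{x^\red w'}\circ F_{x^\red y^\red}^{-1}$ and is matched by the corresponding block of $d^{n-1}\circ\Theta^n$; and symmetrically $(w,x^\red)$ with $w \in X^\red_n$, where the contribution $-F_{x^\red y^\red}^{-1}\circ F_{w y^\red}$ of $\phi^n\psi^n$ is matched by $\Theta^{n+1}\circ d^n$. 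Finally, on the two diagonal special blocks $(y^\red,y^\red)$ and $(x^\red,x^\red)$ the telescoping $F_{x^\red y^\red}\circ F_{x^\red y^\red}^{-1} = \mathrm{id}$ (respectively $F_{x^\red y^\red}^{-1}\circ F_{x^\red y^\red} = \mathrm{id}$) reproduces the identity term on the left; here it is essential that $\psi_{x^\red z}$ and $\phi_{z y^\red}$ vanish for every $z \in X^\red_n$ precisely because $X^\red_n$ excludes the relevant special cell, so that $\phi^n\psi^n$ contributes nothing on these diagonal blocks.

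I expect the only genuine difficulty to be the bookkeeping: keeping the adjacent dimensions of $x^\red$ and $y^\red$ straight, tracking which ``self-terms'' survive in each composition once the omitted cell is taken into account, and getting all signs consistent. Pleasantly, unlike Lemmas \ref{lem:psicoch} and \ref{lem:phicoch}, this verification should require neither the coboundary identity $d\circ d = 0$ nor the acyclicity of $\Sigma$; every block equality ought to follow purely formally from the definitions of $\psi$, $\phi$, and $\Theta$, using only the invertibility of $F_{x^\red y^\red}$ (which is exactly what guarantees these three maps are well-defined in the first place).
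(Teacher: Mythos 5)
Your proposal is correct and follows essentially the same route as the paper: one computes the block action of $\phi^n\circ\psi^n$ from (\ref{eqn:psi}) and (\ref{eqn:phi}) and checks block-by-block that $\mathrm{id}_{C^n}-\Theta^{n+1}\circ d^n-d^{n-1}\circ\Theta^n$ agrees with it, the only nontrivial blocks being those involving $x^\red$ or $y^\red$. Your observations that the identity uses only the invertibility of $F_{x^\red y^\red}$ (not $d\circ d=0$ or acyclicity) and that the special diagonal blocks telescope to the identity are exactly the content of the paper's ``simple calculation.''
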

\begin{proof}
By definition, it suffices to show $\Theta^{n+1} \circ d^n + d^{n-1} \circ \Theta^n = \text{id}_{C^n} - \phi^n \circ \psi^n$. By (\ref{eqn:psi}) and (\ref{eqn:phi}) we note that $\phi \circ \psi$ has the following block action $F(w) \to F(\ow)$ for $w, \ow \in X_n$:
\[
(\phi \circ \psi)_{w\ow} = \begin{cases}
         																			 - F_{x^\red y^\red}^{-1} \circ F_{w y^\red} 		& \ow = x^\red, \\
					  																   - F_{x^\red \ow} \circ F_{x^\red y^\red}^{-1}  & w = y^\red, \\						
																							                                         \text{id}_{F(w)} 					&  w = \ow \in X^\red, \\
																																												0   															 &  \text{otherwise.}
																				\end{cases}
\]
A simple calculation confirms that $\id_{C^n} - \Theta^{n+1} \circ d^n - d^{n-1} \circ \Theta^n$ has precisely the same block action and concludes the proof.
\end{proof}

\section{Algorithms}\label{sec:algos}

In this section we describe our algorithm {\tt Scythe} which constructs an acyclic matching $\Sigma$ on $(X,\leq)$ and iteratively implements the reduction step of \S \ref{sec:equiv} in order to reduce a poset-parametrized cochain complex down to the Morse parametrization. Before turning to the details, we recall our main result. Let $F$ be a parametrization of a cochain complex $(C^\bullet,d^\bullet)$ of free $\bR$-modules over a graded poset $(X,\leq)$ whose covering relation is denoted by $\prec$ as usual. For each $x \in X$ we define $x_+ = \setof{y \in X \mid x \prec y}$ and similarly $x_- = \setof{y \in X \mid x \succ y}$. Assume that the acyclic matching imposed by {\tt Scythe} is called $\Sigma$. Recall from \S\ref{sec:intro} the parameters $n=|X|$, $p=\max_{x \in X}\setof{|x_+|}$, $d=\max_x\rank(F(x))$, $\tilde{m} = \sum_km_k^2$, and $\omega$.

Note that $n$, $p$, and $d$ are input parameters. The net critical elements cardinality $\tilde{m}$ is an output parameter, and the multiplication exponent $\omega$ is purely a property of the underlying coefficient ring $\bR$. The remainder of this section is dedicated to proving our main theorem.

\begin{thm}
\label{thm:main}
Let $F$ parametrize a cochain complex of $\bR$-modules over a graded poset $(X,\leq)$ and let $n, p, d, \tilde{m}$ and $\omega$ be the parameters defined above. Then, the time complexity of constructing the Morse parametrization $F^\Sigma$ via {\tt Scythe} is $\mathrm{O}(np\tilde{m}d^\omega)$ and the space complexity is $\mathrm{O}(n^2pd^2)$.
\end{thm}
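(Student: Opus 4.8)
The plan is to bound both the running time and the peak storage by accounting for the cost of the reduction step of \S\ref{ssec:red}, which {\tt Scythe} simply iterates. Each application of that step eliminates exactly one $\Sigma$-pair $(x^\red,y^\red)$, and since an acyclic matching pairs at most $n/2$ elements there are $\mathrm{O}(n)$ reduction steps in total; the whole problem therefore reduces to bounding the work performed, and the data held in memory, across all of them. The atomic operations are arithmetic on $d\times d$ matrices over $\bR$: a multiplication or an inversion costs $\mathrm{O}(d^\omega)$ time, while storing or adding such a matrix costs $\mathrm{O}(d^2)$. Thus I would reduce both estimates to purely combinatorial counts — the number of block operations performed, and the maximum number of coboundary blocks held at once — each weighted afterward by the appropriate power of $d$.

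First I would isolate the cost of a single reduction step at $(x^\red,y^\red)$. Inverting $F_{x^\red y^\red}$ contributes one $\mathrm{O}(d^\omega)$ term. The update formula (\ref{eqn:red_param}) then modifies the block $F^\red_{wz}$ for precisely those pairs with $w \prec y^\red$ and $x^\red \prec z$ in the current reduced poset, i.e.\ for $(w,z)\in (y^\red)_-\times(x^\red)_+$; each such modification is two matrix multiplications and one subtraction, hence $\mathrm{O}(d^\omega)$ time and $\mathrm{O}(d^2)$ space for the block created or overwritten. Consequently the total time is $\mathrm{O}(d^\omega)$ times $\sum_{\text{steps}}\big(1+|(y^\red)_-|\cdot|(x^\red)_+|\big)$, and everything reduces to bounding this combinatorial sum by $\mathrm{O}(np\tilde{m})$. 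For the space bound I would instead track the largest number of coboundary blocks ever stored simultaneously: each block sits on a covering relation of the current poset, and new relations appear only through the rule $w \prec y^\red \succ x^\red \prec z \Rightarrow w\prec_\red z$, so I would bound the out-degree of any fixed element throughout the run and then sum over the $n$ elements to cap the number of live blocks at $\mathrm{O}(n^2p)$, giving $\mathrm{O}(n^2pd^2)$ space.

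The crux, and the main obstacle, is exactly these two combinatorial estimates, because the covering relation genuinely densifies as the Schur-complement updates accumulate and a naive bound would be cubic in $n$. Here I would lean on the acyclicity of $\Sigma$ in the same way as in the proof of Proposition \ref{prop:samemors}: for any chain of reduced covering relations there is at most one slot into which a removed pair can be inserted, so a given block $F_{wz}$ is touched only a controlled number of times, and each created edge corresponds to a genuine segment of a $\lhd$-increasing gradient path rather than to an arbitrary composite. Extracting one factor of $p$ from the immediate-successor cap $|(x^\red)_+|$ and organizing the remaining count by dimension, the interactions among the surviving cells of dimensions $k$ and $k+1$ are what produce the $m_km_{k+1}$, and hence $\tilde{m}=\sum_k m_k^2$, contribution; the same dimension-wise bookkeeping, together with the out-degree growth estimate, yields the $\mathrm{O}(n^2p)$ cap on live blocks.

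Verifying that acyclicity converts the potentially exponential proliferation of gradient paths (implicit in (\ref{eqn:morserest})) into these polynomial counts is the delicate step; once it is in place, the matrix-arithmetic accounting that multiplies the block counts by $d^\omega$ and $d^2$ is entirely routine. I would therefore spend the bulk of the write-up on the amortized analysis of edge creation and block updates under iterated reductions, and treat the passage from block counts to the stated $\mathrm{O}(np\tilde{m}d^\omega)$ time and $\mathrm{O}(n^2pd^2)$ space bounds as immediate.
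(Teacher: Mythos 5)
Your high-level accounting is the same as the paper's: reduce everything to counting block operations on $d\times d$ matrices, weight each by $d^\omega$ for time and $d^2$ for space, and observe that a single call to {\tt ReducePair}$(x^\red,y^\red)$ touches the blocks indexed by $(y^\red)_-\times (x^\red)_+$. The gap is in the step you yourself flag as the crux and defer: bounding $\sum_{\text{steps}}|(y^\red)_-|\cdot|(x^\red)_+|$. You propose to get this from acyclicity of $\Sigma$, via the ``at most one insertion slot'' argument of Proposition \ref{prop:samemors} and a correspondence between created covering relations and segments of gradient paths. That is not where the bound comes from, and acyclicity alone will not deliver it: at an intermediate stage of the algorithm an element $y^\red$ may have arbitrarily many \emph{non-critical} faces, so nothing in the acyclicity of the matching caps $|(y^\red)_-|$ by $m_k$. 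The bound in the paper is an \emph{algorithmic invariant} enforced by Line 09 of {\tt Scythe}: a pair $(x,y)$ is reduced only when $x$ is the unique remaining non-critical element of $y_-$, so at the moment of reduction $y_-\setminus\{x\}$ consists entirely of critical cells of dimension $k=\dim x$ and hence has size at most $m_k$. The same invariant is what keeps $|(x^\red)_+|\le p$ despite the densification you correctly worry about: the update rule only adds cofaces to elements of $(y^\red)_-\setminus\{x^\red\}$, which are all critical, so a non-critical element such as the $x^\red$ being removed never acquires new cofaces and retains its original out-degree bound $p$. With these two facts the count is immediate ($m_k$ per reduction at a $k$-dimensional critical source, times the loop bookkeeping of at most $n$ inner iterations per critical cell and $m_k$ critical cells per dimension, giving $\sum_k n\,p\,m_k^2 = np\tilde m$); without them, your amortized analysis has no concrete mechanism to produce the factor $m_k$, and your passing identification of the dimension-$k$/dimension-$(k{+}1)$ interaction count with $m_km_{k+1}$ does not match the quantity $\tilde m=\sum_k m_k^2$ actually being claimed.

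Two smaller omissions: you treat {\tt Scythe} as a bare iteration of reduction steps, but the algorithm also performs a breadth-first search (dequeuing, testing the Line 09 condition, re-enqueueing $y_+$) whose cost must be charged somewhere --- in the paper this is exactly what the factor of $n$ (inner-loop iterations per critical cell, under the stated restricted-queuing assumption) accounts for, whereas your factor of $n$ counts only reduction steps. And for space, your out-degree amortization is a workable route to $\mathrm{O}(n^2pd^2)$, but it too should be stated as an actual bound (each of at most $n/2$ reductions adds at most $p$ cofaces to any fixed element, so out-degrees stay $\mathrm{O}(np)$) rather than asserted; the paper instead charges $\mathrm{O}(npd^2)$ for $F$ and a factor of $n$ for the queue. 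In short, the skeleton is right, but the load-bearing combinatorial estimates must be derived from the greedy pairing rule of the algorithm, not from the acyclicity of the resulting matching.
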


\subsection{Description and Verification}

\begin{table}
\centering
{\bf Algorithm: }{\tt Scythe} \label{alg:morsred} \\
{\bf In: }A parametrization $F$ of a cochain complex over a graded poset $(X,\leq)$ \\
{\bf Out: } Transforms $F$ to the Morse parametrization $F^\Sigma$, \\
where $\Sigma$ is an $F$-compatible acyclic matching on $(X,\leq)$.\\
{
\begin{tabular}{|c|l|}
\hline
01 & {\bf define} a queue \Que of $X$-elements \\
02 & {\bf while} $X$ has non-critical elements \\
03 & \tab {\bf select} a minimal non-critical element $c$ of $X$\\
04 & \tab {\bf mark} $c$ as critical \\
05 & \tab {\bf set} \Que $= \varnothing$ \\
06 & \tab {\bf enqueue} $c$ into \Que \\
07 & \tab {\bf while} \Que  is nonempty \\
08 & \tab \tab {\bf dequeue} $y$ from \Que \\
09 & \tab  \tab {\bf if} $y_-$ has exactly one non-critical $x$ with $F_{xy}$ invertible \\
10 & \tab \tab \tab {\bf enqueue} $x_+ \setminus \setof{y}$ into \Que \\
11 & \tab \tab \tab {\tt ReducePair}$(x,y)$ \\
12 & \tab  \tab {\bf end if} \\
13 & \tab \tab {\bf enqueue} $y_+$ into \Que \\
14 & \tab  {\bf end while} \\
15 &  {\bf end while} \\
\hline
\end{tabular}
}
\end{table}

The central idea behind our algorithm is derived from iterated breadth-first search\footnote{In principle, any method for constructing acyclic partial matchings on graded posets will suffice, provided that it ensures sheaf-compatibility by only matching cell pairs whose restriction maps are invertible.} and has been exploited on several occasions in similar but less general computational contexts \cite{mrozek:batko:09a,mischaikow:nanda}. A minimal element $c \in X$ is chosen arbitrarily and declared critical, and elements $y \in c_+$ are scoured for possible pairings. Such a $y$ comprises a viable candidate for pairing if there is a unique uncritical element $x \in y_-$ so that $F_{xy}$ is invertible. As each such pair is found, the reduction step of \S\ref{ssec:red} is applied and both the poset $X$ as well as the parametrization $F$ are locally modified near the reduced pair $(x,y)$ by the subroutine {\tt ReducePair}. The removal of these pairs creates the possibility of new viable candidates for pairings, and we keep track of them using a queue data structure.

Given a pair $x^\red \prec y^\red$ of elements in $X$ with $F_{x^\red y^\red}$ invertible, {\tt ReducePair} performs the reduction step from \S\ref{ssec:red}. The key step of this subroutine is Line 04 which corresponds to updating $F$-values as described in (\ref{eqn:red_param}). Minor modifications to {\tt ReducePair} along with a few additional data structures would also allow us to catalog and store the cochain equivalences $\psi$ and $\phi$ as described in \S\ref{ssec:coch}.

\begin{prop}
\label{prop:verify}
The collection of those $(x,y) \in X \times X$ for which {\tt ReducePair}$(x,y)$ is invoked in Line {\em 12} of {\tt Scythe} constitutes an $F$-compatible acyclic matching $\Sigma$ on $(X,\leq)$.
\end{prop}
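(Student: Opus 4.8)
The plan is to verify the three defining properties of an $F$-compatible acyclic matching in turn: first that $\Sigma$ is a \emph{partial matching} (satisfying the dimension and partition axioms of Definition \ref{defn:matching}), second that it is \emph{$F$-compatible}, and third that it is \emph{acyclic}. The first two are nearly immediate from inspecting the guard on Line 09 of {\tt Scythe}, while the acyclicity will be the genuine content of the argument.

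For the partial matching properties, I would argue directly from the pseudocode. The dimension axiom holds because {\tt ReducePair}$(x,y)$ is only invoked for pairs with $x \in y_-$, i.e. $x \prec y$. For the partition axiom, I would observe that once {\tt ReducePair}$(x,y)$ fires, both $x$ and $y$ are removed from the poset by the reduction step of \S\ref{ssec:red}, so neither can appear in any subsequently selected pair; moreover the element $c$ marked critical on Line 04 is, by construction, never a candidate for pairing. Thus no element belongs to two pairs. Compatibility is built into the selection criterion: Line 09 only admits $x$ with $F_{xy}$ invertible, so for each $(x,y) \in \Sigma$ the map $F_{xy}$ is invertible, as required.

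The main obstacle is establishing acyclicity, since the matching is produced incrementally and the covering relation $\prec$ (hence the relation $\lhd$) changes under each reduction step via the updated covering relation $\prec_\red$ of \S\ref{ssec:red}. I would exploit the breadth-first structure of the two nested {\tt while} loops: within a single outer iteration (fixed critical seed $c$), elements are dequeued in an order respecting increasing distance from $c$ along covering relations, and I would show this induces a well-defined ranking on the pairs that are matched. The key claim is that if $(x,y) \lhd (x',y')$ — meaning $x \prec y'$ — then $(x,y)$ was matched \emph{before} $(x',y')$ in the queue order, so that the transitive closure of $\lhd$ cannot contain a cycle. Concretely, I would track the enqueue/dequeue timestamps: when {\tt ReducePair}$(x,y)$ fires on Line 11, the element $x$ has just been found as the unique non-critical predecessor of the dequeued $y$, and any $y'$ with $x \prec y'$ is enqueued on Line 10 (as part of $x_+ \setminus \{y\}$) and therefore processed strictly later. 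This temporal ordering is precisely a linear extension witnessing that $\lhd$ generates a partial order rather than a cyclic relation.

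A subtlety I would need to address is that the covering relation is modified by {\tt ReducePair} after each matching, so the relation $\lhd$ referenced in the acyclicity definition must be interpreted with respect to the \emph{original} poset $(X,\leq)$ rather than the reduced one. Here I would invoke Proposition \ref{prop:samemors}, which guarantees that the reduction step does not alter the Morse data and, crucially, that any new covering relation $w \prec_\red z$ arising from a reduction factors through the removed pair; this lets me lift $\prec_\red$-relations back to composite $\prec$-relations in $X$ and confirm that the timestamp argument survives under the full sequence of reductions. Assembling the three verified properties then yields that $\Sigma$ is an $F$-compatible acyclic matching on $(X,\leq)$.
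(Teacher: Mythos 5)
Your overall structure matches the paper's: the matching and compatibility axioms are read off from the pseudocode, and acyclicity is established by showing that the order in which pairs are reduced is a linear extension of $\lhd$. The first two parts are fine. The gap is in your justification of the key temporal claim. You argue that if $(x,y) \lhd (x',y')$, i.e.\ $x \prec y'$, then $(x,y)$ is matched first \emph{because} $y'$ is enqueued on Line 10 when $(x,y)$ is reduced and is ``therefore processed strictly later.'' This does not rule out the bad case: $y'$ may already have been dequeued, examined, and matched with $x'$ at an earlier stage, in which case it would no longer lie in $x_+$ at the moment $(x,y)$ is reduced, and your observation says nothing. The enqueueing on Line 10 only guarantees that $y'$ gets \emph{another} look after the reduction; it presupposes, rather than proves, that $y'$ is still unmatched at that moment. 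What actually forces the ordering is the uniqueness guard on Line 09: a pair $(x',y')$ can be reduced only when $x'$ is the \emph{sole} remaining non-critical element of $y'_-$. Since $x \in y'_-$ and $x$ is never marked critical (it is destined to be matched with $y$), $x$ must already have been removed --- together with $y$ --- before $(x',y')$ can pass that guard. This single observation is the paper's entire acyclicity argument, and it is missing from your proof; the BFS/timestamp machinery neither supplies it nor substitutes for it.

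Your final paragraph correctly flags the subtlety that the covering relation is mutated by {\tt ReducePair}, so that $\lhd$ must be interpreted carefully. The paper's own proof does not dwell on this, and your proposed fix via Proposition \ref{prop:samemors} is in the right spirit but not worked out: that proposition asserts invariance of the Morse data under a reduction step, not a lifting of reduced covering relations $\prec_\red$ back to covering relations in $(X,\leq)$, so as cited it does not deliver what you need. If you repair the acyclicity argument as above, however, this issue becomes peripheral, since the ordering argument runs directly off the dynamically maintained sets $y_-$ and the criticality flags rather than off the queue.
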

\begin{proof}
The compatibility of the pairing with the parametrization $F$ is enforced in Line 10 of {\tt Scythe} where we check for the invertibility of $F_{xy}$. The partial matching axioms of Definition \ref{defn:matching} are easily seen to be satisfied, so we focus here on proving that $\Sigma$ is acyclic. Returning again to Line 10, note that we only make a pairing $(x,y)$ when $x$ is the last remaining uncritical element in $y_-$. Now, any pair $(x',y')$ for which $(x',y') \lhd (x,y)$ must by definition satisfy $x' \prec y$, or equivalently, $x' \in y_-$. Since any such $x'$ is manifestly uncritical, it must already have been removed from $X$ along with its paired element $y'$ before the current pair $(x,y)$ was removed. Thus, the order of pair removal is monotonic with respect to $\lhd$ and so the collection of removed pairs generates an acyclic matching on $X$.
\end{proof}

It follows immediately from the preceding proposition and the machinery developed in \S\ref{sec:equiv} that the input parametrization $F$ is modified in-place to the Morse parametrization $F^\Sigma$: the input poset $(X,\leq)$ is reduced to the critical poset $(M,\leq_\Sigma)$ and the coboundary operator is suitably updated one pair at a time.

\subsection{Complexity Analysis}

\begin{table}
\centering
{\bf Algorithm: }{\tt ReducePair} \label{alg:psi} \\
{\bf In: }A pair $(x^\red,y^\red) \in X \times X$ with $x^\red \prec y^\red$ and $F_{x^\red y^\red}$ invertible \\
{\bf Out: } Modifies $F$ according to the reduction step  \\
{
\begin{tabular}{|c|l|}
\hline
01 & {\bf for each} $z \in x^\red_+ \setminus \setof{y^\red}$ \\
02 & \tab {\bf for each} $w \in y^\red_- \setminus \setof{x^\red}$ \\
03 & \tab \tab {\bf set} $w \prec z$ \\
04 & \tab \tab {\bf replace} $F_{wz}$ by $F_{wz} - F_{x^\red z} \circ F_{x^\red y^\red}^{-1} \circ F_{wy^\red}$ \\
05 & \tab {\bf end for} \\
06 & {\bf end for} \\
07 & {\bf remove} $x^\red$ and $y^\red$ from $X$ \\
\hline
\end{tabular}
}
\end{table}

Before performing a thorough analysis of {\tt Scythe} in terms of the complexity parameters introduced in the previous section, we briefly describe some simplifying assumptions. First, the Queue data structure must be managed so that the inner {\bf while} loop spanning Lines 02 through 14 actually terminates. Whenever an element of $X$ is added to the Queue, it is flagged so that it may not be enqueued again in that iteration of the inner {\bf while} loop. But each time the Queue is reinitialized in Line 05, all these flags are cleared. Moreover, we ensure that aside from the critical cell $c$ chosen in Line 03 of {\tt Scythe}, no other critical cells are enqueued. Finally, for the purposes of analyzing complexity we make the simplifying assumption that we only enqueue those elements of $X$ whose dimension exceeds $\dim c$ by $1$. Although this restriction is unnecessary (and indeed, detrimental to performance) in practice, it greatly simplifies the complexity analysis.

Note that the time complexity of calling {\tt ReducePair} with input $(x,y)$ where $\dim x = k$ is $\text{O}(pm_kd^\omega)$ as follows. The cardinality of $x_+\setminus\setof{y}$ is at most $p$ by assumption, and since the set $y_- \setminus\setof{x}$ only has critical elements by Line 09 of {\tt Scythe}, its cardinality does not exceed $m_k$. For each pair $w$ and $z$ of elements from these sets, the matrix algebra of Line 04 incurs a further cost of $\text{O}(d^\omega)$: the cost of matrix addition is dominated by the costs of inversion and multiplication, which are $\text{O}(d^\omega)$ by assumption. Since the inner {\bf while} loop runs at most $n$ times, its total time complexity is $\text{O}(npm_kd^\omega)$ where $k = \dim c$ by virtue of our restricted queuing strategy. Finally, since the outer {\bf while} loop executes precisely once per $k$-dimensional critical element, the total complexity of {\tt Scythe} evaluates to $\text{O}(np\tilde{m}d^\omega)$ as claimed in Theorem \ref{thm:main}.

\begin{rem}
{\em
The following observations have practical significance when simplifying computation of cellular sheaf cohomology via discrete Morse theory.
\begin{enumerate}
\item Since the output Morse parametrization $F^\Sigma$ generated by {\tt Scythe} is again a cochain complex of $\bR$-modules parametrized by a poset, it is possible to {\em iterate} the simplification scheme. In particular, one may impose an $F^\Sigma$-compatible acyclic matching on the critical poset $(M,\leq_\Sigma)$ and so forth, until the Morse parametrization stabilizes. This stabilization is caused by the eventual depletion of cell pairs which may be compatibly matched. In particular, if there are no invertible sub-blocks in the matrix representation of the Morse coboundary operator, then no further cell pairs may be matched. 
\item There is an obvious {\em dual} algorithm, {\tt CoScythe}, which processes $(X,\leq)$ from the top-down. In particular, a maximal element $c \in X$ may be initially chosen as critical and one may then search for pairings in the set $c_-$ of elements covered by $c$.
\end{enumerate}
}
\end{rem}

Turning to issues of memory, we recall that $F$ is transformed in-place to $F^\Sigma$. Therefore, the only additional overhead is the \Que structure. The cost of storing $F$ itself is $\text{O}(npd^2)$: there are $n$ elements in the underlying poset $X$; for each $x \in X$ there are at most $p$ elements $y \in X$ satisfying $x \prec y$, and for each of these we must store at most a $d \times d$ matrix $F_{xy}$. Moreover, since \Que itself may get as large as $n$ for each element of $X$, our worst-case space complexity evaluates to $\text{O}(n^2pd^2)$.

\section{Applications to Distributed Cohomology Computation}\label{sec:apps}

The ability to efficiently compute cellular sheaf cohomology will have implications in those emerging applications [signal processing, sampling, tracking, network coding, optimization, etc.] described in \S\ref{sec:intro}. Given the focus of this paper (on computational cohomology), we do not detail such applications. Instead, we demonstrate an application of sheaf cohomology to the more ubiquitous problem of computing ordinary cohomology over a field. Passing from this to a richer coefficient system can and does facilitate a tremendous simplification of the underlying topological space without loss of cohomology.

There are at least two classical examples of this principle in action: the \v{C}ech approach and the Leray approach. We describe these classical computational methods below, then present a sheaf-theoretic unification. This has the effect of giving a unified interpretation of persistent \cite{zomorodian:carlsson:05,carlsson:09}, zig-zag cohomology \cite{pyramid}, and the Mayer-Vietoris blowup \cite{segal1968classifying} --- all important recent tools in computational topology.

\begin{rem}
{\em
Throughout the remainder of this section, we assume:
	\begin{enumerate}
	\item all topological spaces are compact, Hausdorff and locally contractible;
	\item all covers consist of finitely many open subsets; and
	\item the coefficient ring $\bR$ is a field.
	\end{enumerate}
}
\end{rem}

\subsection{The \v{C}ech Approach}

The following classical approach \cite{alexandroff28} provides a convenient and ubiquitous combinatorial model for representing unwieldy topological spaces.
\begin{defn}
{\em
Let $\cU$ be a cover of a topological space $X$. Its {\em nerve} $N_\cU$ is the abstract simplicial complex whose $n$-dimensional simplices are collections $\sigma = (U_0,\ldots,U_n)$ of cover elements with non-empty {\em support} $\bigcap_0^n U_j$.
}
\end{defn}
Following the usual conflation of an abstract simplicial complex with its cell complex (``geometric'') realization, one has the following (simplified version) of the classical theorem of Leray:

\begin{thm}[Nerve Theorem \cite{leray45, borsuk48}]
Given a topological space $X$ and a cover $\cU$, if the support $U_\sigma \subset X$ of each $\sigma \in N_\cU$ is acyclic (i.e., the reduced cohomology $\tilde{H}^\bullet(U_\sigma;\bR) = 0$ vanishes),
then $H^\bullet(N_\cU;\bR)\cong H^\bullet(X;\bR)$.
\end{thm}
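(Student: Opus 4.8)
The plan is to interpolate between the space $X$ and its nerve $N_\cU$ by means of a \v{C}ech--singular double complex (the cohomological incarnation of the Mayer--Vietoris blowup) and to extract the isomorphism by collapsing its two filtration spectral sequences. Concretely, for $p,q \geq 0$ I would set
\[
K^{p,q} = \bigoplus_{\sigma} C^q(U_\sigma;\bR),
\]
where $\sigma = (U_0,\ldots,U_p)$ ranges over the $p$-simplices of $N_\cU$, the support $U_\sigma = \bigcap_{j=0}^p U_j$ is nonempty by definition of the nerve, and $C^q(-;\bR)$ denotes singular cochains. I equip $K^{\bullet,\bullet}$ with the singular coboundary $d$ in the $q$-direction and a simplicial (\v{C}ech) coboundary $\delta$ in the $p$-direction, the latter assembled from the restriction maps $C^q(U_\sigma) \to C^q(U_\tau)$ attached to the codimension-one faces $\tau \prec \sigma$ --- exactly the block structure of a cellular-sheaf coboundary operator over $N_\cU$. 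A routine sign check gives $d\delta = \pm\delta d$, so the total complex $(\mathrm{Tot}^\bullet K, D)$ with $D = d \pm \delta$ is well defined, and for a finite cover both associated spectral sequences are first-quadrant and hence converge to $H^\bullet(\mathrm{Tot}^\bullet K)$.

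First I would collapse in the vertical ($q$) direction. Fixing $p$ and taking cohomology of each summand $C^\bullet(U_\sigma;\bR)$ produces $H^q(U_\sigma;\bR)$. The acyclicity hypothesis $\tilde{H}^\bullet(U_\sigma;\bR)=0$ says precisely that every support has the cohomology of a point: $H^q(U_\sigma;\bR)\cong\bR$ for $q=0$ and vanishes for $q>0$. Thus the $E_1$-page is concentrated in the single row $q=0$, where it reduces to the simplicial cochain complex of $N_\cU$ with constant coefficients $\bR$. The spectral sequence degenerates and yields $H^n(\mathrm{Tot}^\bullet K)\cong H^n(N_\cU;\bR)$ for every $n$.

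Next I would collapse in the horizontal (\v{C}ech) direction, fixing $q$ and taking cohomology of the row $K^{\bullet,q}$ under $\delta$. The claim is that this row, augmented by the restriction $C^q(X;\bR)\to\bigoplus_i C^q(U_i;\bR)$, is exact in positive \v{C}ech degree and recovers $C^q(X;\bR)$ in degree zero. Granting this, the second $E_1$-page is concentrated in the column $p=0$, equal to $C^\bullet(X;\bR)$, so that $H^n(\mathrm{Tot}^\bullet K)\cong H^n(X;\bR)$ as well; comparing the two computations then forces $H^\bullet(N_\cU;\bR)\cong H^\bullet(X;\bR)$, as claimed.

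The hard part is precisely this \v{C}ech-acyclicity driving the second collapse, since it is the only point at which genuine topology (rather than linear algebra) enters. The presheaf $U\mapsto C^q(U;\bR)$ is not a sheaf, so the exactness is not automatic. Under the standing hypotheses --- $X$ compact Hausdorff (hence paracompact) and locally contractible, with a finite open cover --- the classical route is a partition of unity subordinate to $\cU$, which furnishes an explicit contracting homotopy for $\delta$ on each row, as in the Mayer--Vietoris argument of \cite{segal1968classifying}. The route more in keeping with the present paper is to replace the singular-cochain rows by the Godement (flasque) resolution of the constant sheaf $\bR_X$: each flasque sheaf is acyclic on every open set and has an exact \v{C}ech complex against any cover, so the horizontal collapse becomes formal, while local contractibility guarantees that the constant sheaf computes ordinary cohomology $H^\bullet(X;\bR)$. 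Either route supplies the missing exactness; the remaining verifications --- the signs in $D$, the commutation of restriction with $d$ and $\delta$, and convergence of both spectral sequences --- are routine. I would also remark that only the cohomological hypothesis (acyclicity, not contractibility) of the supports is used, which is exactly why the conclusion is an isomorphism on cohomology rather than a homotopy equivalence.
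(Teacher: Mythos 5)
The paper does not actually prove this statement---it is quoted as classical with citations to Leray and Borsuk---but your double-complex argument is the standard proof and is structurally identical to the technique the paper \emph{does} use to prove Theorem \ref{thm:leraysheaf}: a \v{C}ech--singular (or \v{C}ech--Godement) double complex whose rows are exact and whose two spectral sequences are compared, the only difference being that here the collapse is forced by acyclicity of the supports (killing everything above the row $q=0$) rather than by one-dimensionality of the nerve (killing everything beyond the column $p=1$). Your outline is correct, and you rightly isolate the exactness of the augmented rows as the only nontrivial input. One wrinkle in your singular-cochain route: the contracting homotopy identifies the degree-zero \v{C}ech cohomology of the row with the $\cU$-small cochains $C^q_{\cU}(X;\bR)$, not with $C^q(X;\bR)$ itself (the restriction $C^q(X)\to\bigoplus_i C^q(U_i)$ is surjective onto the compatible families but not injective), so you still need the small-simplices theorem to conclude $H^\bullet(\mathrm{Tot}^\bullet K)\cong H^\bullet(X;\bR)$; your Godement-resolution alternative avoids this entirely and is the route closest to the paper's own use of flabby resolutions, with local contractibility supplying the comparison between sheaf and singular cohomology.
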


Typically, the cost of guaranteeing acyclicity of supports is that one has to refine substantially the cover $\cU$ and hence greatly increase the number of simplices in $N_\cU$. The following notion is naturally motivated by the desire to compute cohomology with coarser covers and hence fewer simplices.

\begin{defn}
{\em
The {\em \v{C}ech cellular sheaves} $\Cech^n$ associated to the cover $\cU$ of a space $X$ are defined on the nerve $N_\cU$ by the following data. Each $\sigma \in N_\cU$ is assigned the $\bR$-module $\Cech^n(\sigma) = H^n(U_\sigma;\bR)$ and each face relation $\sigma \subset \tau$ is assigned the linear map $\Cech^n_{\sigma\tau}: H^n(U_{\sigma};\bR)\to H^n(U_{\tau};\bR)$ arising from the inclusion of supports $U_\tau \hookrightarrow U_\sigma$.
}
\end{defn}

If all simplex supports are acyclic, then $\Cech^0$ reduces to the constant sheaf on $N_\cU$ and all other $\Cech^n$s are trivial; in the absence of acyclicity assumptions, the following result yields a simple correction.

\begin{prop}
\label{prop:cechsheaf}
{
	Let $X$ be a topological space and $\cU$ a cover whose nerve $N_\cU$ is at most one-dimensional. Then, 	for each $n \in \N$,
\begin{equation}
		H^n(X;\bR) \cong H^0(N_\cU;\Cech^n)\oplus H^1(N_\cU;\Cech^{n-1}).
\end{equation}
}
\end{prop}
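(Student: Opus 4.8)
The plan is to derive the decomposition from the Mayer--Vietoris (\v{C}ech-to-derived-functor) spectral sequence associated to the open cover $\cU$, using the hypothesis that $N_\cU$ is at most one-dimensional to force the sequence to collapse immediately into a two-term extension. Over the field $\bR$ this extension splits, giving the stated direct sum. The role of the higher \v{C}ech sheaves $\Cech^n$ is precisely to correct for the possible non-acyclicity of the supports $U_\sigma$, which is what distinguishes this from the Nerve Theorem.

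Concretely, first I would fix a soft (equivalently, any acyclic) resolution $\mathscr{I}^\bullet$ of the constant sheaf $\bR_X$; such resolutions exist and compute singular cohomology because $X$ is compact Hausdorff and locally contractible. I would then form the \v{C}ech double complex of the finite open cover, $K^{p,q} = \bigoplus_{\dim \sigma = p} \mathscr{I}^q(U_\sigma)$, equipped with the internal differential of the resolution and the alternating-sum \v{C}ech coboundary built from the restriction maps on the inclusions $U_\tau \hookrightarrow U_\sigma$. Filtering $\mathrm{Tot}(K)$ by the \v{C}ech degree $p$ and taking internal cohomology first yields
\[
E_1^{p,q} = \bigoplus_{\dim \sigma = p} H^q(U_\sigma;\bR) \;\Longrightarrow\; H^{p+q}(X;\bR),
\]
where the identification of the abutment with $H^\bullet(X;\bR)$ comes from the \emph{other} filtration: for each fixed $q$ the augmented \v{C}ech complex of the soft sheaf $\mathscr{I}^q$ is exact in positive \v{C}ech degree (softness implies vanishing of higher \v{C}ech cohomology on a paracompact space), so that spectral sequence is concentrated in the column $p=0$ and collapses onto $H^\bullet\big(\Gamma(X,\mathscr{I}^\bullet)\big) = H^\bullet(X;\bR)$.

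Next I would identify the $E_2$ page with cellular sheaf cohomology on the nerve. By construction the $q$-th row of the $E_1$ page, together with its $d_1$ differential, is term-by-term the cellular cochain complex of $\Cech^q$ from Definition~\ref{defn:sheaf_cohom}: its degree-$p$ group is $\bigoplus_{\dim \sigma = p}\Cech^q(\sigma)=\bigoplus_{\dim \sigma = p}H^q(U_\sigma;\bR)$, and the simplicial incidence signs $[\sigma:\tau]$ realize exactly the alternating \v{C}ech signs, so $d_1$ agrees with the sheaf coboundary. Hence $E_2^{p,q} = H^p(N_\cU;\Cech^q)$. Now the one-dimensionality of $N_\cU$ means $\Cech^q(\sigma)=0$ for $\dim \sigma \geq 2$, so $E_2^{p,q}=0$ unless $p\in\{0,1\}$. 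With only two nonzero columns this is a finite two-column spectral sequence, every $d_r$ for $r\geq 2$ has source or target in a vanishing column, and so $E_2=E_\infty$. The resulting two-step filtration on $H^n(X;\bR)$ is then just the short exact sequence
\[
0 \to E_\infty^{1,n-1} \to H^n(X;\bR) \to E_\infty^{0,n} \to 0,
\]
that is,
\[
0 \to H^1(N_\cU;\Cech^{n-1}) \to H^n(X;\bR) \to H^0(N_\cU;\Cech^n) \to 0 ,
\]
which splits because $\bR$ is a field and every module in sight is an $\bR$-vector space.

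The step I expect to require the most care is setting up the double complex so that its total cohomology is genuinely $H^\bullet(X;\bR)$ rather than the \v{C}ech cohomology $\check{H}^\bullet(\cU;\bR)$ of the cover. This is exactly where the topological hypotheses (openness and finiteness of $\cU$, compact Hausdorff local contractibility of $X$) are consumed: one must build the sequence from an honest acyclic resolution of $\bR_X$ and verify the row-exactness that pins the abutment to the true cohomology of $X$. Once that identification is in hand, the collapse and the splitting are formal consequences of having only the columns $p=0,1$.
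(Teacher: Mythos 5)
Your argument is correct and is essentially the paper's own proof: the paper establishes the more general Theorem \ref{thm:leraysheaf} (of which this proposition is the case $f=\mathrm{id}$) via the same double complex, built from the flabby sheaf of singular cochains rather than a soft resolution, with row-exactness cited from Bredon, collapse forced by the two nonzero columns coming from the one-dimensional nerve, and the splitting supplied by $\bR$ being a field. The only point to tighten is your parenthetical ``(equivalently, any acyclic)'' --- $\Gamma$-acyclicity alone does not yield exactness of the augmented \v{C}ech complex of the cover, so flabbiness or softness (on a paracompact space) is genuinely needed there, exactly as you anticipate in your closing paragraph.
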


We defer the proof to the next section where a more general result is established, but remark here that similar results have been obtained before \cite{pyramid, burghelea2011topological} in the context of zig-zag persistent homology. The central difference between these results and ours is that the existing results depend on the direct-sum decomposition of zig-zag persistence modules into indecomposable modules (or barcodes). On the other hand, our result makes the recognition that these modules are conceived as sheaves over a linear nerve, and moreover that the cohomology of these sheaves can be quickly computed using discrete Morse theory.

\subsection{The Leray Approach}
One can try to compute the cohomology of $X$ with $\bR$ coefficients from a sufficiently nice map $f:X \to Y$ into some simpler space $Y$. If the image of $f$ comes equipped with a cover $\cV$ having nerve $N_\cV$, one can try to pull-back the associated \v{C}ech sheaf on $N_\cV$ along $f$ to yield local information about $X$.

\begin{defn}
{\em
The {\em Leray cellular sheaves} $\Leray^n$ associated to a map $f:X \to Y$ and a cover $\cV$ of $f(X) \subset Y$ are defined over the nerve $N_\cV$ as follows. Each simplex $\sigma \in N_\cV$ is assigned the cohomology of the preimage of its support, i.e., $\Leray^n(\sigma) = H^n(f^{-1}(V_\sigma);\bR)$; furthermore, each face relation $\sigma \subset \tau$ is assigned the map induced on cohomology by the inclusion $f^{-1}(V_\tau) \hookrightarrow f^{-1}(V_\sigma)$.
}
\end{defn}

In the special case where $X = Y$ and $f$ is the identity map, the Leray sheaves clearly coincide with the \v{C}ech sheaves associated to the cover $\cV$ of $X$. Thus, the following result generalizes Proposition \ref{prop:cechsheaf}.

\begin{thm}
\label{thm:leraysheaf}
	Let $f:X \to Y$ be continuous. Assume a cover $\cV$ of the image $f(X) \subset Y$ whose nerve $N_\cV$ is at most one-dimensional. Then, for each $n \in \N$,
\begin{equation}
		H^{n}(X;\bR) \cong H^0(N_\cV;\Leray^n) \oplus H^1(N_\cV;\Leray^{n-1}) .
\end{equation}
\end{thm}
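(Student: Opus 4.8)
The plan is to obtain the decomposition from the degeneration of the Mayer--Vietoris (Leray--\v{C}ech) spectral sequence attached to the pulled-back cover. Since $\cV$ covers $f(X)$, the preimages $\cW = \setof{f^{-1}(V_i)}$ form a finite open cover of $X$, and any nonempty intersection $f^{-1}(V_{i_0})\cap\cdots\cap f^{-1}(V_{i_p})$ is exactly $f^{-1}(V_\sigma)$ for the $p$-simplex $\sigma = (V_{i_0},\ldots,V_{i_p}) \in N_\cV$. First I would invoke the standard first-quadrant spectral sequence of this open cover,
\[
E_1^{p,q} = \bigoplus_{\dim \sigma = p} H^q(f^{-1}(V_\sigma);\bR) \Longrightarrow H^{p+q}(X;\bR),
\]
whose convergence to the cohomology of $X$ is guaranteed by the standing hypotheses (compact Hausdorff and locally contractible spaces, genuine open covers, field coefficients), under which \v{C}ech and singular cohomology agree.

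The key bookkeeping step is to recognize each column of the $E_1$-page as a group of cellular cochains of a Leray sheaf. By definition $\Leray^q(\sigma) = H^q(f^{-1}(V_\sigma);\bR)$, so $E_1^{p,q} = \bigoplus_{\dim \sigma = p}\Leray^q(\sigma)$ is precisely the group of $p$-cochains of $N_\cV$ with coefficients in $\Leray^q$. Furthermore the $E_1$-differential is the alternating sum of the maps induced on $H^q$ by the inclusions $f^{-1}(V_\tau)\hookrightarrow f^{-1}(V_\sigma)$ for $\sigma \subset \tau$; these are exactly the restriction maps $\Leray^q_{\sigma\tau}$ weighted by the incidence numbers $[\sigma:\tau]$, so the $E_1$-differential coincides with the sheaf coboundary. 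Taking cohomology therefore identifies the second page as $E_2^{p,q} = H^p(N_\cV;\Leray^q)$.

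The hypothesis that $N_\cV$ is at most one-dimensional now forces $E_2^{p,q} = 0$ for every $p \geq 2$, leaving only the two columns $p = 0$ and $p = 1$. Each higher differential $d_r$ with $r \geq 2$ raises the column index by $r \geq 2$ and so lands in a vanishing column; hence the spectral sequence degenerates at $E_2$ and $E_\infty^{p,q} = E_2^{p,q}$. The abutment then equips $H^n(X;\bR)$ with a two-step filtration having graded pieces $E_\infty^{0,n} = H^0(N_\cV;\Leray^n)$ and $E_\infty^{1,n-1} = H^1(N_\cV;\Leray^{n-1})$, that is, a short exact sequence
\[
0 \to H^1(N_\cV;\Leray^{n-1}) \to H^n(X;\bR) \to H^0(N_\cV;\Leray^n) \to 0.
\]
Since $\bR$ is a field, this sequence of vector spaces splits, yielding the claimed isomorphism; specializing to $X = Y$ with $f = \id$ recovers Proposition \ref{prop:cechsheaf}.

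The main obstacle is not the degeneration, which is immediate once the pages are named, but the construction and convergence of the spectral sequence together with the correct identification of its $E_2$-page in the \emph{absence} of any acyclicity of the supports. This is exactly where the topological hypotheses earn their keep: local contractibility and the use of honest open covers ensure that the \v{C}ech-to-derived-functor (equivalently, generalized Mayer--Vietoris) spectral sequence converges to the singular cohomology of $X$, while retaining the full $q$-grading $H^q(f^{-1}(V_\sigma))$ lets us avoid any good-cover assumption. A more hands-on alternative sidesteps spectral sequences: one-dimensionality of $N_\cV$ means every triple intersection $f^{-1}(V_i)\cap f^{-1}(V_j)\cap f^{-1}(V_k)$ is empty, so the Mayer--Vietoris double complex of $\cW$ has only two nonzero columns, and a direct diagram chase on the associated short exact sequence of total complexes reproduces the same answer.
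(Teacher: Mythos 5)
Your proposal is correct and follows essentially the same route as the paper: both arguments run the generalized Mayer--Vietoris/\v{C}ech spectral sequence of the pulled-back cover (the paper realizes it as an explicit double complex of flabby resolutions of singular cochains with exact rows, invoking the acyclic assembly lemma, whereas you cite the standard $E_1^{p,q}=\bigoplus_{\dim\sigma=p}H^q(f^{-1}(V_\sigma))$ sequence directly), identify $E_2^{p,q}=H^p(N_\cV;\Leray^q)$, use one-dimensionality of the nerve to force collapse at $E_2$, and split the resulting two-step filtration using field coefficients. Your explicit attention to where convergence needs the standing hypotheses, and the aside on the two-column diagram chase, are consistent with (and slightly more careful than) the paper's treatment.
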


\begin{proof}
		The theorem is a simple consequence of the Leray spectral sequence which packages the cohomology of $X$ into a coefficient system over the space $Y$ from a map $f:X\to Y$ \cite{mccleary}. The restriction to a one-dimensional nerve forces the spectral sequence to collapse on the second page and hence yield the desired isomorphisms. More precisely, for each open $V\subset f(X)$, let $C^n(V;\bR)$ denote the $\bR$-module freely generated by the set of all cochains defined on $V$. Clearly if $V \subset U$, then there is a surjection $C^n(U;\bR) \to C^n(V;\bR)$ defined by restriction of cochains. The sheaf $\cF$ associated to this presheaf of singular cochains is consequently {\em flabby} (see \cite[p.~97]{ramanan2005global}).
		
Consider the following double complex of $\bR$-modules:
	\[
	 \xymatrix{ \vdots & \vdots & \vdots & \vdots\\
	C^2(X) \ar[r] \ar[u] & \bigoplus_{\dim \sigma = 0} \cF^2(f^{-1}(V_\sigma)) \ar[r] \ar[u] & \bigoplus_{{\dim \tau = 1}} \cF^2(f^{-1}(V_\tau)) \ar[r] \ar[u] & 0 \\
	C^1(X) \ar[r] \ar[u] &\bigoplus_{\dim \sigma = 0} \cF^1(f^{-1}(V_\sigma)) \ar[r] \ar[u] & \bigoplus_{\dim \tau = 1} \cF^1(f^{-1}(V_\tau)) \ar[r] \ar[u]  & 0 \\
	C^0(X) \ar[r] \ar[u] & \bigoplus_{\dim \sigma = 0} \cF^0(f^{-1}(V_\sigma)) \ar[r] \ar[u] & \bigoplus_{\dim \tau = 1} \cF^0(f^{-1}(V_\tau)) \ar[r] \ar[u] & 0 }
	\]
	It follows from standard results \cite[Thm~II.5.5, Thm~III.4.13]{bredon1997sheaf} that the rows are exact. By the acyclic assembly lemma \cite{weibel1995introduction}, the spectral sequence converges to the cohomology of the leftmost column, i.e., $H^\bullet(X;\bR)$. If one takes cohomology in the vertical direction, one obtains the defined cochain groups associated to the Leray cellular sheaves $\Leray^n$:
	\[
	 \xymatrix{  \vdots & \vdots & \vdots\\
	\bigoplus_{\dim \sigma = 0} H^2(f^{-1}(V_\sigma)) \ar[r] & \bigoplus_{\dim \tau = 1} H^2(f^{-1}(V_\tau)) \ar[r] & 0 \\
	\bigoplus_{\dim \sigma = 0} H^1(f^{-1}(V_\sigma)) \ar[r] & \bigoplus_{\dim \tau = 1} H^1(f^{-1}(V_\tau)) \ar[r]  & 0 \\
	\bigoplus_{\dim \sigma = 0} H^0(f^{-1}(V_\sigma)) \ar[r] & \bigoplus_{\dim \tau = 1} H^0(f^{-1}(V_\tau)) \ar[r] & 0}
	\]
Taking cohomology horizontally corresponds precisely to computing separately (in parallel, if one wishes) the cohomology of the Leray sheaves $\Leray^n$ over $N_\cV$, thus producing the final stable page of the spectral sequence.
	\[
	 \xymatrix{  \vdots & \vdots & \vdots\\
	 H^0(N_\cV; \Leray^2) & H^1(N_\cV;\Leray^2) & 0 \\
	 H^0(N_\cV; \Leray^1) \ar[rru] & H^1(N_\cV;\Leray^1)  & 0 \\
	 H^0(N_\cV; \Leray^0) \ar[rru] & H^1(N_\cV;\Leray^0) & 0}
	\]
Over a general ring $\bR$, these terms prescribe a filtration of the cohomology, giving rise to extension problems; however, over a field one can read off the cohomology directly.
\end{proof}

Note that the proof indicates precisely where we require the one-dimensional nerve restriction. Without this assumption in place, the second page of the spectral sequence may not be stable and the conclusion of the theorem need not hold.

\subsection{An Example}

\begin{figure}[h!]
\includegraphics[scale=1.2]{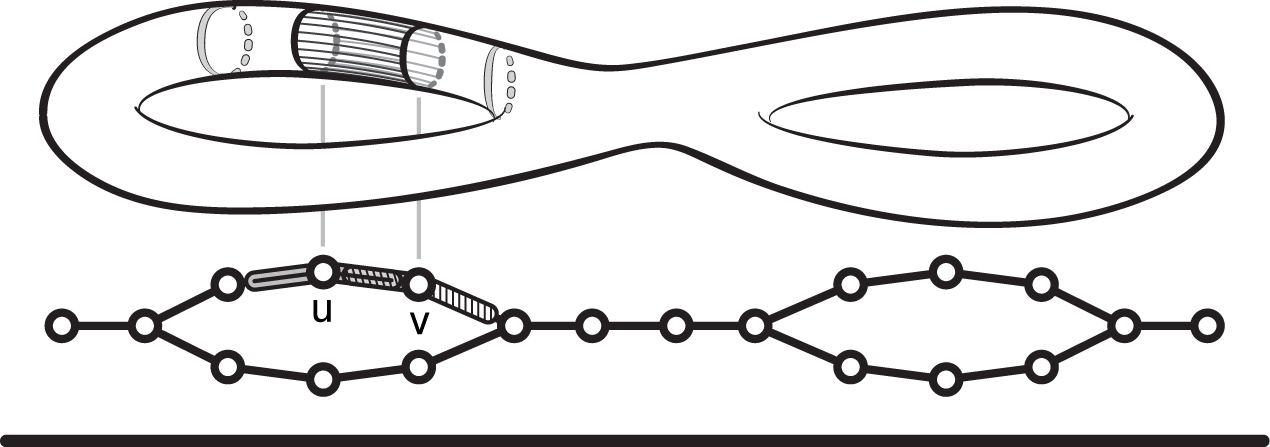}
\caption{A genus-2 surface $X$ hovers over (a subdivision of) its Reeb graph $\Gamma$ associated with downward projection to the real line $\R$. The fibers (lying in $X$) over nodes $u$ and $v$ of $\Gamma$ are highlighted, and their intersection comprises the fiber over the edge $uv$.}
\label{fig:reeb}
\end{figure}

A fairly natural situation where computing cohomology via Theorem \ref{thm:leraysheaf} is advantageous over the obvious alternatives arises when dealing with {\bf Reeb graphs}. Consider a topological space $X$ equipped with a function $f:X \to \mathbb{R}$, and recall that the Reeb graph of the pair $(X,f)$ is a quotient of $X$ by the equivalence relation which identifies two points whenever they lie in the same connected component of $f^{-1}(c)$ for some $c \in \mathbb{R}$.

Let $X$ be a finite CW complex, and consider a continuous function $f:X \to \mathbb{R}$. Given the Reeb graph $\Gamma $ of $(X,f)$ -- for instance, the one illustrated in Figure \ref{fig:reeb} -- one can immediately transform the problem of computing $H^\bullet(X;\bR)$ to that of computing $H^\bullet(\Gamma;\Leray)$, where $\Leray$ is the Leray cellular sheaf on a suitable subdivision of $\Gamma$ associated to the canonical projection $P:X \to \Gamma$. In particular, Theorem \ref{thm:leraysheaf} asserts an isomorphism
\[
H^n(X;\bR) \cong H^0(\Gamma;\Leray^n) \oplus H^1(\Gamma;\Leray^{n-1}),
\]
and in cases where $P$ distributes the cells of $X$ almost evenly over those of $\Gamma$, it is computationally prudent to evaluate the right side in order to determine the left. In order to estimate the advantage, we employ the following complexity parameters:
\begin{enumerate}
\item $N$ is the number of cells in $X$,
\item $d$ is the dimension of (the maximal cells in) $X$,
\item $g$ is the number of cells (vertices and edges) in $\Gamma$, and
\item $K \leq N$ bounds the number of cells in $P^{-1}(v) \subset X$ across vertices $v \in \Gamma$.
\end{enumerate}

In addition to the usual cost of computing $H^\bullet(\Gamma;\Leray)$, one must also take into account the burden incurred when extracting the data which determines $\Leray$, i.e., the stalks and restriction maps. To this end, note that the cost of computing a stalk $\Leray^\bullet(v) = H^\bullet(P^{-1}(v);\bR)$ over a vertex $v$ of $\Gamma$ is $O(K^3)$ via Smith diagonalization of a matrix no larger than $K \times K$ in size. Similarly, each stalk $\Leray(e)$ over an edge $e$ and each restriction map $\Leray(v) \to \Leray(e)$ may be evaluated in $O(K^3)$ time since all matrices involved have their sizes bounded above by $K \times K$. More importantly, these local stalk and restriction map computations may be performed in parallel (there are twice as many restriction maps to compute as there are edges in $\Gamma$), and hence the total cost of computing all the $\Leray$ sheaf data is no more than $O(K^3)$.

Turning now to the computation of sheaf cohomology $H^\bullet(\Gamma;\Leray)$, we note that the relevant cochain complex
\[
0 \to \bigoplus_{\dim v = 0} \hspace{-.1in} \Leray(v) \stackrel{\delta}{\to} \bigoplus_{\dim e = 1}  \hspace{-.1in} \Leray(e) \to 0 \to 0 \to \cdots
\]
contains only two interesting cochain groups (parametrized by the vertices and edges of $\Gamma$ respectively) and a single (potentially) nontrivial coboundary map $\delta$ between them.  Here the matrix representation of $\delta$ consists of at most $g \times g$ blocks arising from restriction maps over incidence relations of cells in $\Gamma$. But each such restriction map furnishes a block no larger than $d \times d$ in size -- after all, the domain and codomain of the restrictions are cohomologies of subcomplexes of $X$, and $X$ itself has dimension $d$.  Thus, the matrix representation of $\delta$ has size bounded above by $gd \times gd$. Even in the complete absence of Morse theoretic simplification, one may therefore evaluate $H^\bullet(\Gamma;\Leray)$ in $O(g^3d^3)$ time. Adding the $O(K^3)$ cost of computing $\Leray$ data, we confront a combined complexity of $O(K^3 + g^3d^3)$ for building the Leray sheaf of $P:X \to \Gamma$ in parallel and evaluating its cohomology.

Thus, the sheaf-cohomological method of computing $H^\bullet(X,\bR)$ is much faster than the traditional methods whenever one has $K^3+g^3d^3 \ll N^3$. In particular, this inequality holds when two mild conditions are satisfied by $P$:
\begin{enumerate}
\item $P$ distributes the cells of $X$ evenly across those of $\Gamma$, so $N \approx Kg$, and
\item the fibers of $P$ have small cohomology relative to their size, i.e., $d \ll K$.
\end{enumerate}
With these assumptions in place, it is straightforward to estimate the ratio $r$ of worst-case complexity when using the Leray sheaf of $P$ to that of directly computing $H^\bullet(X;\bR)$. Clearly, we have
\[
r = \frac{K^3+g^3d^3}{N^3} = \frac{K^3}{N^3} + \frac{g^3d^3}{N^3}.
\]
Using $N \approx Kg$ twice, we have
\[
r \approx \frac{1}{g^3} + \frac{d^3}{K^3}.
\]
Since $g$ may be increased by subdivision and since $d \ll K$ by assumption on the fibers, $r \ll 1$ and the sheaf-theoretic approach enjoys a significant speedup.

\subsection{A Unifying Perspective}

There is a more sophisticated version of the nerve described originally by Segal~\cite{segal1968classifying} which is homotopically faithful to the underlying space independent of the particulars of the cover. This notion has been used in recent applications~\cite{zomorodian2008localized} and parallelizations for homology computation~\cite{lewis2012multicore}.

\begin{defn}
{\em
Let $X$ be a topological space equipped with a cover $\cU$ with nerve $N_\cU$. The {\em Mayer Vietoris blowup} $M_\cU$ associated to $\cU$ is a subset of the product $X \times N_\cU$ defined as follows. The pair $(x,s)$ lies in $M_\cU$ if and only if there is some simplex $\sigma \in N_\cU$ for which $x \in U_\sigma$ and $s \in \sigma$.
}
\end{defn}

Being a subset of the product, $M_\cU$ is equipped with natural surjective projection maps
\[
\xymatrix{ ~ & M_\cU \ar[ld]_{\rho_1} \ar[rd]^{\rho_2} & ~\\
X & ~ & N_\cU}
\]
The map $\rho_1$ has contractible fibers: for any $x \in X$, we have $\rho_1^{-1}(x) = \setof{x} \times \sigma_x$ where $\sigma_x$ is the unique simplex of maximal dimension whose support contains $x$. Thus, the Mayer-Vietoris blowup is homotopy-equivalent to $X$ via $\rho_1$ in full generality. On the other hand, it is easy to see that the map $\rho_2$ fails to have contractible fibers precisely when the simplex supports are not contractible. In fact, given $s \in N_\cU$, the fiber $\rho_2^{-1}(s)$ has the homotopy type of the support of $\sigma_s$, which is the unique simplex of maximal dimension whose realization contains $s$. Since cohomology is a homotopy invariant, this leads to the following observation which unifies the \v{C}ech and Leray approaches.

\begin{prop}
The Leray cellular sheaves $\Leray^n$ associated to the map $\rho_2: M_\cU \to N_\cU$, where $N_\cU$ is covered by (small neighborhoods of the topological) simplices $\setof{\sigma}_{\sigma \in N_\cU}$, are isomorphic to the \v{C}ech cellular sheaves $\Cech^n$ associated to the cover $\cU$.
\end{prop}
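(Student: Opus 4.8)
The plan is to construct, for every $n$, a stalkwise isomorphism $\Leray^n(\sigma) \cong \Cech^n(\sigma)$ that is natural with respect to the face relations, so that the two collections of data assemble into isomorphic cellular sheaves over a common base. The engine of the comparison will be the \emph{other} projection $\rho_1$, whose globality supplies the naturality almost for free.

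First I would pin down the cover $\cV$ so that the two sheaves are indexed by the same complex. Cover the realization $|N_\cU|$ by the open stars $\mathrm{st}(v)$ of its vertices $v$, which are in bijection with the elements of $\cU$. This is a good cover of $|N_\cU|$, and a classical computation identifies its nerve with $N_\cU$ itself; under this identification the support of a simplex $\sigma$ is exactly the open star $V_\sigma = \mathrm{st}(\sigma)$, a neighbourhood of the (open) simplex $\sigma$. Thus both $\Leray^n$ (over $N_\cV \cong N_\cU$) and $\Cech^n$ (over $N_\cU$) are parametrized by the simplices of $N_\cU$, and it remains to compare them simplexwise.

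The stalk comparison is where the blowup does its work. By definition $\Leray^n(\sigma) = H^n(\rho_2^{-1}(V_\sigma);\bR)$, so I would restrict $\rho_1$ to $\rho_2^{-1}(\mathrm{st}(\sigma))$ and observe that it surjects onto $U_\sigma$, since $\bigcup_{\tau \geq \sigma}U_\tau = U_\sigma$ (the support is largest on $\sigma$ itself). Over a point $x \in U_\sigma$ the fibre is $\setof{x}\times(\sigma_x \cap \mathrm{st}(\sigma))$, where $\sigma_x$ is the maximal simplex whose support contains $x$; this intersection is the open star of $\sigma$ inside the closed simplex $\sigma_x$ and is therefore contractible. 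A Vietoris--Begle / Leray argument --- legitimate under the standing hypotheses that the spaces are compact, Hausdorff and locally contractible and that $\bR$ is a field --- then yields $\Leray^n(\sigma) = H^n(\rho_2^{-1}(\mathrm{st}(\sigma));\bR) \cong H^n(U_\sigma;\bR) = \Cech^n(\sigma)$.

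Naturality then comes essentially for free. For a face relation $\sigma \leq \tau$ one has $\mathrm{st}(\tau) \subseteq \mathrm{st}(\sigma)$ and hence an inclusion $\rho_2^{-1}(V_\tau) \hookrightarrow \rho_2^{-1}(V_\sigma)$, and because $\rho_1$ is a single globally defined map the square relating this inclusion to $U_\tau \hookrightarrow U_\sigma$ commutes on the nose. Passing to cohomology identifies the Leray restriction $\Leray^n_{\sigma\tau}$ with the \v{C}ech restriction $\Cech^n_{\sigma\tau}$, so the stalk isomorphisms above constitute an isomorphism of cellular sheaves $\Leray^n \cong \Cech^n$. The main obstacle is the stalk step: one must verify carefully both that the fibres of $\rho_1$ restricted to $\rho_2^{-1}(\mathrm{st}(\sigma))$ are genuinely contractible and that the hypotheses for a Vietoris--Begle-type isomorphism are met for these (possibly noncompact) supports. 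Once this is secured, the nerve identification and the naturality square are routine.
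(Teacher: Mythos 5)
Your argument is essentially correct, and it is worth noting at the outset that the paper does not actually prove this proposition: it is offered as an ``observation'' following the discussion of the fibers of $\rho_1$ and $\rho_2$, the implicit reasoning being that the preimage under $\rho_2$ of a small neighborhood of $\sigma$ has the homotopy type of a single fiber $\rho_2^{-1}(s)\cong U_{\sigma_s}$ for $s$ interior to $\sigma$. You supply the two things that sketch leaves out. First, you resolve the ambiguity in ``small neighborhoods of the topological simplices'' by taking the open-star cover $\setof{\mathrm{st}(v)}$ of $|N_\cU|$, whose nerve is canonically $N_\cU$ with supports $V_\sigma=\mathrm{st}(\sigma)$; some such choice is needed for the statement to make sense, since a cover literally indexed by all simplices by neighborhoods of their closures has a different nerve. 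Second---and this is the real content---you extract the stalk isomorphisms from the single globally defined map $\rho_1$, which makes commutation with the restriction maps automatic; the paper's fiberwise homotopy-type remark identifies the stalks but says nothing about naturality, without which one does not have an isomorphism of cellular sheaves.

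The one step to tighten is the appeal to Vietoris--Begle, which you rightly flag. The restriction of $\rho_1$ to $\rho_2^{-1}(\mathrm{st}(\sigma))$ is surjective onto $U_\sigma$ with contractible fibers (your identification of the fiber over $x$ with the open star of $\sigma$ inside the closed simplex $\bar{\sigma}_x$, a convex set, is correct), but it is not obviously a closed or proper map, since $\rho_2^{-1}(\mathrm{st}(\sigma))$ is not closed in $U_\sigma\times|N_\cU|$; the classical hypotheses are therefore not verified as stated. You can sidestep this entirely: the straight-line homotopy $(x,p)\mapsto(x,\,tp+(1-t)\hat{\sigma})$ toward the barycenter $\hat{\sigma}$ remains inside $\rho_2^{-1}(\mathrm{st}(\sigma))$ (by convexity of $\bar{\sigma}_x$ and positivity of the barycentric coordinates indexed by $\sigma$) and deformation retracts it onto $\rho_2^{-1}(\hat{\sigma})=U_\sigma\times\setof{\hat{\sigma}}$, on which $\rho_1$ restricts to a homeomorphism. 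Since this retraction commutes with $\rho_1$, it yields the stalk isomorphism and the naturality in one stroke, and it is precisely the rigorous form of the paper's assertion that the preimage has the homotopy type of the fiber.
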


\begin{rem}
{\em
We conclude with the following remarks.
\begin{enumerate}
\item The commonality between the \v{C}ech and Leray approaches comes as no surprise to anyone sufficiently familiar with spectral sequences (and would have surprised neither \v{C}ech nor Leray).
\item Both strategies are examples of {\em distributed} cohomology computation because in order to determine the sheaf $\Cech^n$ or $\Leray^n$, one only needs to compute cohomology locally: of a non-trivial intersection of covering sets in the former case, or of a small neighborhood of the fiber $f^{-1}(y)$ in the  latter case. In principle, one can assign each local computation to a different processor, compute the appropriate sheaf cohomology over a decidedly nicer space (either $N_\cU$ or $Y$ depending on the circumstances), and aggregate this information to compute the desired cohomology of $X$.
\item By taking the appropriate linear duals and working with cosheaves \cite{curry}, all of our results transform to computations of homology rather than cohomology.
\end{enumerate}
}
\end{rem}

\section*{ACKNOWLEDGEMENT}
This work was supported in part by federal contracts FA9550-12-1-0416, FA9550-09-1-0643, and HQ0034-12-C-0027.


\end{document}